\numberwithin{equation}{section}
\newtheorem{thm}{Theorem}[section]
\newtheorem{pro}[thm]{Proposition}
\newtheorem{lem}[thm]{Lemma}
\newtheorem{cor}[thm]{Corollary}
\newtheorem*{MainTheorem1}{Theorem \ref{main1}}
\newtheorem*{MainTheorem2}{Theorem \ref{main2}}
\theoremstyle{definition}
\theoremstyle{remark}
\newcommand{\Tor}{\mathrm{Tor}}
\newcommand{\Ext}{\mathrm{Ext}}
\newcommand{\Hom}{\mathrm{Hom}}
\newcommand{\bd}{\mathrm{bd}}
\newcommand{\Int}{\mathrm{Int}}
\numberwithin{equation}{section}
\begin{document}
%%%%%%%%%%% Begin Topmatter %%%%%%%%%%%%%%%%%

\title[Homological dimension and homogeneous ANR spaces]
{Homological dimension and homogeneous ANR spaces}

\author{V. Valov}
\address{Department of Computer Science and Mathematics,
Nipissing University, 100 College Drive, P.O. Box 5002, North Bay,
ON, P1B 8L7, Canada} \email{veskov@nipissingu.ca}

\thanks{The author was partially supported by NSERC
Grant 261914-13.}

 \keywords{homology membrane, homological dimension, homology groups, homogeneous metric $ANR$-compacta}

\subjclass[2010]{Primary 55M10, 55M15; Secondary 54F45, 54C55}
\begin{abstract}
The homological dimension $d_G$ of metric compacta was introduced by Alexandroff in \cite{a}. In this paper we provide some general properties of $d_G$, mainly with an eye towards describing the dimensional full-valuedness of compact metric spaces. As a corollary of the established properties of $d_G$, we prove that any two-dimensional $lc^2$ metric compactum is dimensionally full-valued. This improves the well known result of Kodama \cite{ko1} that every two-dimensional $ANR$ is dimensionally full-valued.
Applications for homogeneous metric $ANR$-compacta are also given.
\end{abstract}
\maketitle\markboth{}{Homological dimension}
%%%%%%%%%% End topmatter %%%%%%%%%%%%%%%%%%%%%

%%%%%%%%%%%%%%%%%%%%%%%%%%%%%%%%%%%%%%%%%%%%%%%%%%%%%%%%%%%%%%%%
%%%%%%%%%%%%%%%%%%%%%%%%%%%%%%%%%%%%%%%%%%%%%%%%%%%%%%%%%%%%%%%%

%%%%%%%%%%%%%%%%%% TABLE OF CONTENT %%%%%%%%%%%%%%%%%%%%%%%%%%%%%%%%%%

%\tableofcontents

%%%%%%%%%%%%%%%%%%%%%%%%%%%%%%%%%%%%%%%%%%%%%%%%%%%%%%%%%%%%%%%%%%%
%%%%%%%%%%%%%%%%%%%%%%%%%%%%%%%%%%%%%%%%%%%%%%%%%%%%%%%%%%%%%%%%%%%%%%

\section{Introduction}
Reduced \v{C}ech homology $H_n(X;G)$ and cohomology groups $H^n(X;G)$ with coefficient from an abelian group $G$ are considered everywhere below.
By a space, unless stated otherwise, we mean a metric compactum.

Alexandroff \cite{a} introduced the homological dimension theory as a further geometrization of the ordinary dimension theory. His definition of the homological dimension $d_GX$ of a space $X$ (with coefficients in an abelian group $G$) provided a new characterization of the covering dimension $\dim X$: If $X$ is finite-dimensional, then $\dim X$ is the maximum integer $n$ such that $\dim X=d_{\mathbb Q_1}X=d_{\mathbb S^1}X=n$, where $\mathbb S^1$ is the circle group and $\mathbb Q_1$ is the group of rational elements of $\mathbb S^1$. Here is Alexandroff's definition: the homological dimension $d_GX$ is the maximum integer $n$ such that  there exist a closed set $F\subset X$ and a nontrivial element $\gamma\in H_{n-1}(F;G)$ with $\gamma$ being $G$-homologous to zero in $X$. According to \cite[p.207]{a} we have $d_GX\leq\dim X$ for any  space $X$.

The aim of this paper is to provide more properties of the homological dimension. Except some general properties of $d_G$, we also describe dimensional full-valuedness of compact metric spaces using the equalities $d_GX=\dim X$ for some specific groups $G$. The lack of exactness of \v{C}ech homology is a real problem. We overcome this problem by using the exact homology, developed by Sklyarenko \cite{sk}, and its coincidence with \v{C}ech homology in some important cases (let us note that the exact homology is equivalent to Steenrod homology in the class of metric compacta).

Because the definition of $d_GX$ does not provide any information for the homology groups $H_{k-1}(F;G)$ when $F\subset X$ is closed and $k<d_GX-1$, we consider
the set $\mathcal{H}_{X,G}$ of all integers $k\geq 1$ such that there exists a closed set $F\subset X$ and a nontrivial element $\gamma\in H_{k-1}(F;G)$, which is homologous to zero in $X$. Obviously, $d_GX=\max\mathcal{H}_{X,G}$. The set $\mathcal{H}_{X,G}$ was very useful in establishing some local homological properties of homogeneous $ANR$-compacta, see \cite{vv0}. In this paper we also provide some properties of $\mathcal{H}_{X,G}$.

Suppose $(K,A)$ is a pair of closed subsets of a space $X$ with $A\subset K$. By $i^n_{A,K}$ we denote the homomorphism from $H_n(A;G)$ into $H_n(K;G)$ generated by the inclusion $A\hookrightarrow K$. This inclusion generates also the homomorphism $j_{K,A}^n:H^{n}(K;G)\to H^{n}(A;G)$.
Following \cite{bb}, we say that
$K$ is an {\em $n$-homology membrane spanned on $A$ for some $\gamma\in H_n(A;G)$} provided $\gamma$ is homologous to
zero in $K$, but not homologous to zero in any proper closed subset of $K$ containing $A$. It is well known that for space $X$ and a closed set $A\subset X$ the existence a non-trivial element $\gamma\in H_n(A;G)$ with $i^n_{A,X}(\gamma)=0$ yields the existence of a closed set $K\subset X$ containing $A$ such that $K$ is an $n$-homology membrane for $\gamma$ spanned on $A$, see \cite[Property 5., p.103]{bb}. The above statement remains true for any Hausdorff compactum $X$ and a closed set $A\subset X$, see Lemma \ref{membrane} below.

Recall that the cohomological dimension $\dim_GX$ is the largest integer $n$ such that there exists a closed set $A\subset X$ with $H^n(X,A;G)\neq 0$. It is well known that $\dim_GX\leq n$ iff every map $f\colon A\to K(G,n)$ can be extended to a map $\tilde f\colon X\to K(G,n)$, where $K(G,n)$ is the Eilenberg-MacLane space of type $(G,n)$, see \cite{sp}. We also say that a space $X$ is dimensionally full-valued if $\dim X\times Y=\dim X+\dim Y$ for all metric compacta $Y$, or equivalently (see \cite{bo} and \cite[Theorem 11]{ku}), $\dim_GX=\dim_{\mathbb Z}X$ for any abelian group $G$.

The paper is organised as follows.
In Section 2 we provide some properties of the dimension $d_G$ and the set $\mathcal{H}_{X,G}$. As stated above, we generally have the inequalities
$d_{\mathbb Z}X\leq\dim X\geq d_{G}X$, where the equalities yield peculiar properties of $X$ stated below:
\begin{MainTheorem2}
The following holds for any space $X$ with $\dim X=n$:
\begin{itemize}
\item[(1)] If $d_{G}X=n$ for a torsion free abelian group $G$, then:
\item[(1.1)] there exists a point $x\in X$ and a local base $\mathcal B_x$ at $x$ consisting of open sets $U$ such that $\bd\,\overline U$ is a dimensionally full-valued set of dimension $n-1$, and
\item[(1.2)] $X$ is dimensionally full-valued;
%\item[(2)] If $d_{G}X=n$, where $G$ is any abelian group with a trivial torsion part, then $X$ is dimensionally full-valued;
\item[(2)] If $X$ is dimensionally full-valued and $X$ has a base of $\mathcal B$ of open sets $U$ with $H^{n}(\overline U;\mathbb Z)=0$, then
$d_{G}X=n$ for every field $G$;
\item[(3)] If $X$ is dimensionally full-valued and $lc^{n}$, then $d_{\mathbb Z}X=d_{G}X=n$ for every field $G$.
\end{itemize}
\end{MainTheorem2}

The assumption in item (1)of the above theorem that $G$ is torsion free is essential. Indeed, we already observed that $\dim X=\dim_{\mathbb Q_1}$ for any finite-dimensional metric compactum $X$, but not any such $X$ is dimensionally full-valued.

It follows from Theorem 2.9 (see Corollary 2.10) that if $X$ is a space such that $\dim X=n$ and $X$ is homologically locally connected up to dimension $n$ with respect to the singular homology with integer coefficients (br., $lc^n$), then the following conditions are equivalent:  $X$ is dimensionally full-valued; there is a point $x\in X$ having a local base $\mathcal B_x$ of open sets $U$ such that $\bd\,U$ is dimensionally full-valued for each $U\in\mathcal B_x$;
$d_GX=n$ for any torsion free field $G$; $d_{\mathbb Z}X=n$.

Let us also mention Corollary 2.11 stating that every two-dimensional $lc^{2}$-space is dimensionally full-valued. This improves a result of  Kodama \cite[Theorem 8]{ko1} that every two-dimensional $ANR$ is dimensionally full-valued.

Using the sets $\mathcal{H}_{X,G}$, we obtain in Section 3 some properties of homogeneous $ANR$-spaces. In particular, the following proposition is shown:
\begin{MainTheorem1}
Let $X$ be a homogeneous $ANR$-space.
\begin{itemize}
\item[(1)] If $G$ is a field and $\dim_GX=n$, then $n\in\mathcal H_{X,G}$ and $n+1\not\in\mathcal H_{X,G}$;
\item[(2)] If $\dim X=n$, then $\mathcal H_{X,\mathbb Z}\cap\{n-1,n\}\neq\varnothing$.
\end{itemize}
\end{MainTheorem1}

The final Section 4 contains all preliminary information about the exact and \v{C}ech homology groups we are using in this paper.

\section{Some properties of $d_G$}

Everywhere below $\widehat{H}_*$ denotes the exact homology (see \cite{ma}, \cite{sk}), where $G$ is any module over a commutative ring with unity.
This homology is equivalent to Steenrod's homology \cite{st} in the category of
compact metric spaces. According to \cite{sk}, for every module $G$
and a compact metric pair $(X,A)$ there exists a natural transformation $T_{X,A}:\widehat{H}_{*}(X,A;G)\to H_{*}(X,A;G)$ between the exact and \v{C}ech homologies such that $T^k_{X,A}:\widehat{H}_{k}(X,A;G)\to H_{k}(X,A;G)$, is a surjective homomorphism for each $k$, see \cite[Theorem 4]{sk} (if $A$ is the empty set, we denote $T^k_{X,\varnothing}$  by
$T^k_{X}$). This homomorphism is an isomorphism in some situations, see Proposition 4.4.

We already mentioned that $d_GX\leq\dim X$ for any group $G$. Next proposition shows that in some cases we have a stronger inequality.
\begin{pro}
For any $X$ and a field $G$ we have $d_GX\leq\dim_GX$.
\end{pro}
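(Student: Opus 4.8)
The plan is to reduce the inequality to a single nonvanishing statement in relative Čech cohomology. It suffices to prove that every $n\in\mathcal H_{X,G}$ satisfies $n\le\dim_G X$, since then $d_GX=\max\mathcal H_{X,G}\le\dim_G X$. So fix $n\in\mathcal H_{X,G}$: there are a closed set $F\subset X$ and a nontrivial $\gamma\in H_{n-1}(F;G)$ with $i^{n-1}_{F,X}(\gamma)=0$. I want a closed set witnessing $\dim_G X\ge n$, and the natural candidate is $F$ itself, for which I will show $H^n(X,F;G)\neq 0$.

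Two structural facts drive the argument. First, Čech cohomology is exact on compact metric pairs, so the inclusion $F\hookrightarrow X$ gives an exact sequence of reduced groups
\[
H^{n-1}(X;G)\xrightarrow{\,j^{n-1}_{X,F}\,}H^{n-1}(F;G)\xrightarrow{\,\delta\,}H^n(X,F;G).
\]
Second, because $G$ is a field, reduced Čech homology and reduced Čech cohomology of a metric compactum are in perfect duality: there is an isomorphism $H^{k}(\,\cdot\,;G)\cong\Hom_G(H_{k}(\,\cdot\,;G),G)$, natural in the space, under which the cohomology map $j^{n-1}_{X,F}$ is identified with the transpose $(i^{n-1}_{F,X})^{\ast}$ of the homology map $i^{n-1}_{F,X}$. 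Note that the field hypothesis is exactly what lets me sidestep the exact homology $\widehat H_*$ here and work with \v{C}ech cohomology directly.

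With these in hand the computation is pure linear algebra. The hypothesis says $i^{n-1}_{F,X}$ kills $\gamma\neq 0$, hence has nontrivial kernel and fails to be injective; over a field the transpose of a non-injective map fails to be surjective, because a functional not vanishing on $\ker i^{n-1}_{F,X}$ lies outside the image of $(i^{n-1}_{F,X})^{\ast}$. Thus $j^{n-1}_{X,F}$ is not surjective, so by exactness $\delta\neq 0$ and $H^n(X,F;G)\neq 0$. Therefore $\dim_G X\ge n$, and taking the maximum over $\mathcal H_{X,G}$ finishes the proof.

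The main obstacle is the second structural fact: establishing the homology--cohomology duality for \v{C}ech theory over a field on compact metric spaces, together with its naturality in the space. Concretely, one must commute dualization with the limits defining \v{C}ech theory---$H^{k}$ is a direct limit over nerves $N_\alpha$ of finite covers while $H_{k}$ is the corresponding inverse limit---and check that the canonical map $\varinjlim H_k(N_\alpha;G)^{\ast}\to(\varprojlim H_k(N_\alpha;G))^{\ast}$ is an isomorphism. For an inverse system of finite-dimensional vector spaces the Mittag--Leffler condition holds automatically, which is precisely what makes this map bijective; the remaining care is bookkeeping between the reduced groups in the pair sequence and the relative group $H^n(X,F;G)$ used in the definition of $\dim_G$.
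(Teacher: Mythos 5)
Your overall route is genuinely different from the paper's: the paper first excludes $k\geq n+2$ via the Universal Coefficient Theorem for \v{C}ech homology with field coefficients, and then handles $k=n+1$ by passing to Sklyarenko's exact homology $\widehat H_*$, using the long exact sequence of the pair there (which \v{C}ech homology lacks) together with the sequence $0\to\Ext_G(H^{n+2}(X,A;G),G)\to\widehat{H}_{n+1}(X,A;G)\to\Hom_G(H^{n+1}(X,A;G),G)\to 0$. You instead stay entirely on the cohomology side, exploiting that \v{C}ech cohomology of compact pairs is exact; the skeleton of that argument is sound and uniform in $n$.

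However, your second ``structural fact'' is false as stated, and the Mittag--Leffler justification you sketch does not establish it. The canonical map $c_Z\colon H^{k}(Z;G)=\varinjlim H_k(N_\alpha;G)^{*}\to\bigl(\varprojlim H_k(N_\alpha;G)\bigr)^{*}=\Hom_G(H_k(Z;G),G)$ is injective (that is what stabilization of the decreasing net of images inside a finite-dimensional space buys you), but it is not surjective in general: a $G$-linear functional on an inverse limit of finite-dimensional spaces need not factor through any finite stage. For the Hawaiian earring $Z$ one has $H_1(Z;G)\cong G^{\mathbb N}$ and $H^1(Z;G)\cong\bigoplus_{\mathbb N}G$, while $\Hom_G(G^{\mathbb N},G)$ has strictly larger dimension. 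This matters exactly where you conclude that $j^{n-1}_{X,F}$ is not surjective: the functional $\phi$ with $\phi(\gamma)\neq 0$ witnessing non-surjectivity of $(i^{n-1}_{F,X})^{*}$ lives a priori only in the full dual $\Hom_G(H_{n-1}(F;G),G)$, and if $c_F$ is not onto you cannot infer that $\phi$ is realized by a \v{C}ech cohomology class, hence cannot conclude that $j^{n-1}_{X,F}$ misses anything. The gap is repairable: since $\gamma\neq 0$ in $\varprojlim H_{n-1}(N_\alpha;G)$, some projection $\gamma_{\alpha}\neq 0$ in the finite-dimensional space $H_{n-1}(N_\alpha;G)$; a functional on that finite stage detecting $\gamma_\alpha$ pulls back to a class $\alpha\in H^{n-1}(F;G)$ with $\langle\alpha,\gamma\rangle\neq 0$, and naturality of the Kronecker pairing shows $\alpha\notin j^{n-1}_{X,F}(H^{n-1}(X;G))$, after which your exactness argument gives $H^n(X,F;G)\neq 0$ as intended. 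So replace the appeal to a perfect duality by this finite-stage detection.
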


\begin{proof}
It suffices to prove the required inequality when $\dim_GX=n<\infty$.
The set $\mathcal H_{X,G}$ does not contain any $k\geq n+2$ because, according to the Universal Coefficient Theorem for \v{C}ech homology with field coefficients, $k-1>\dim_GX$ implies $H_{k-1}(F;G)=0$ for every closed set $F\subset X$. So, we need to show that $n+1\not\in\mathcal H_{X,G}$. Indeed, if $n+1\in\mathcal H_{X,G}$, then there exits a closed set $A\subset X$ and
 non-trivial $\gamma\in H_{n}(A;G)$ with $i^n_{A,X}(\gamma)=0$. Since $G$ is a field, by Proposition 4.4(i),
$H_{n}(A;G)$ and $H_{n}(X;G)$ are isomorphic to the groups
$\widehat{H}_{n}(A;G)$ and $\widehat{H}_{n}(X;G)$, respectively. Let $\widehat{\gamma}$ be the element from $\widehat{H}_{n}(A;G)$  corresponding to $\gamma$ and $\widehat{i}^n_{A,X}:\widehat{H}_{n}(A;G)\to\widehat{H}_{n}(X;G)$ be the inclusion homomorphism. It follows from the exact sequence
$$\to\widehat{H}_{n+1}(X,A;G)\to\widehat{H}_{n}(A;G)\to\widehat{H}_{n}(X;G)\to $$ that $\widehat{H}_{n+1}(X,A;G)\neq 0$. On the other hand, $\dim_GX=n$ implies  $H^{n+2}(X,A;G)=0$, so $\mathrm{Ext_G}(H^{n+2}(X,A;G),G)=0$. Hence, by \cite[Theorem 3]{sk}, we have the exact sequence
$$0\to\widehat{H}_{n+1}(X,A;G)\to\mathrm{Hom_G}(H^{n+1}(X,A;G),G)\to 0.$$
Then the groups
 $\widehat{H}_{n+1}(X,A;G)$ and $\mathrm{Hom_G}(H^{n+1}(X,A;G),G)$ are isomorphic. Finally, since $\dim_GX=n$, $H^{n+1}(X,A;G)=0$. Thus, $\widehat{H}_{n+1}(X,A;G)$ is also trivial, a contradiction.
\end{proof}

We consider the set $\mathcal{CH}_{X,G}$ of all integers $k\geq 1$ such that there exist a closed set $F\subset X$ and a nontrivial element $\alpha\in H^{k-1}(F;G)$ with $\alpha\not\in j^{k-1}_{X,F}(H^{k-1}(X;G))$ (in such a situation we say that $\alpha$ is not extendable over $X$). Obviously, the set
$\mathcal{CH}_{X,G}$ is a dual notion of the set $\mathcal{H}_{X,G}$.

Next proposition provides a connection between the sets $\mathcal{CH}_{X,G}$ and $\mathcal{H}_{X,G^*}$, where $G^*$ denotes the dual group of $G$ ($G$ is considered as a topological group with the discrete topology).
\begin{pro}
Let $X$ be a metric compactum and $G$ a countable abelian group. Then $n\in\mathcal{CH}_{X,G}$ if and only if $n\in\mathcal{H}_{X,G^*}$.
\end{pro}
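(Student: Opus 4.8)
The plan is to reduce the statement to the classical Pontryagin duality between \v{C}ech cohomology with coefficients in the discrete group $G$ and \v{C}ech homology with coefficients in the compact dual group $G^*=\Hom(G,\mathbb T)$, where $\mathbb T=\mathbb R/\mathbb Z$ is the circle group. First I would establish (or quote from Section 4) the natural isomorphism
\[
\Phi_F\colon H_{n-1}(F;G^*)\xrightarrow{\ \cong\ }\Hom\bigl(H^{n-1}(F;G),\mathbb T\bigr)
\]
valid for every closed $F\subset X$. For a finite simplicial complex this is the Universal Coefficient statement combined with the exactness of Pontryagin duality: the dual of the cochain complex $\Hom(C_*,G)$ is the chain complex $C_*\otimes G^*$, so duality carries $H^{n-1}(\,\cdot\,;G)$ to $H_{n-1}(\,\cdot\,;G^*)$. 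Writing $F$ as the inverse limit of the nerves of its finite open covers, $H^{n-1}(F;G)$ is the direct limit and $H_{n-1}(F;G^*)$ the inverse limit of the corresponding groups of the nerves; since Pontryagin duality interchanges $\varinjlim$ and $\varprojlim$, the isomorphism passes to the limit. Countability of $G$ enters here: $G^*$ is then compact metrizable, the relevant \v{C}ech homology is the genuine inverse limit (no $\varprojlim^{1}$ obstruction, inverse limits of compact groups being exact), and the duality is clean. The essential point I would verify from naturality is that under $\Phi_F$ and $\Phi_X$ the induced map $i^{n-1}_{F,X}$ becomes the transpose of the restriction $j^{n-1}_{X,F}$; that is, $\Phi_X\bigl(i^{n-1}_{F,X}(\gamma)\bigr)=\Phi_F(\gamma)\circ j^{n-1}_{X,F}$.

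Granting this, the implication $n\in\mathcal H_{X,G^*}\Rightarrow n\in\mathcal{CH}_{X,G}$ is immediate. I would take a nonzero $\gamma\in H_{n-1}(F;G^*)$ with $i^{n-1}_{F,X}(\gamma)=0$ and, via $\Phi_F$, regard it as a nonzero homomorphism $\bar\gamma\colon H^{n-1}(F;G)\to\mathbb T$. The relation $i^{n-1}_{F,X}(\gamma)=0$ translates into $\bar\gamma\circ j^{n-1}_{X,F}=0$, i.e. $\bar\gamma$ annihilates $j^{n-1}_{X,F}(H^{n-1}(X;G))$. Since $\bar\gamma\neq0$ there is $\alpha\in H^{n-1}(F;G)$ with $\bar\gamma(\alpha)\neq0$; then $\alpha\neq0$ and $\alpha\notin j^{n-1}_{X,F}(H^{n-1}(X;G))$, so $\alpha$ is a nonzero, non-extendable class witnessing $n\in\mathcal{CH}_{X,G}$.

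For the converse I would argue by separation. Starting from a nonzero $\alpha\in H^{n-1}(F;G)$ with $\alpha\notin B:=j^{n-1}_{X,F}(H^{n-1}(X;G))$, its class in the quotient $H^{n-1}(F;G)/B$ is nonzero; as $\mathbb T$ is divisible, hence injective in the category of abelian groups, there is a homomorphism $H^{n-1}(F;G)/B\to\mathbb T$ not vanishing on that class. Composing with the quotient map gives $\bar\gamma\colon H^{n-1}(F;G)\to\mathbb T$ with $\bar\gamma|_B=0$ and $\bar\gamma(\alpha)\neq0$, so $\bar\gamma\neq0$. Then $\gamma:=\Phi_F^{-1}(\bar\gamma)$ is a nonzero element of $H_{n-1}(F;G^*)$ and $\Phi_X\bigl(i^{n-1}_{F,X}(\gamma)\bigr)=\bar\gamma\circ j^{n-1}_{X,F}=0$, whence $i^{n-1}_{F,X}(\gamma)=0$ and $n\in\mathcal H_{X,G^*}$.

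I expect the whole difficulty to lie in the first paragraph: producing the duality isomorphism $\Phi_F$ together with its naturality under the inclusion $F\hookrightarrow X$, and confirming that the compact-coefficient \v{C}ech homology appearing on the left is exactly the object dual to \v{C}ech cohomology. This is where countability of $G$, compactness of $G^*$, and exactness of inverse limits of compact groups are genuinely needed. Once the naturality identity $\Phi_X\circ i^{n-1}_{F,X}=(j^{n-1}_{X,F})^{*}\circ\Phi_F$ is secured, both directions are routine separation arguments resting only on the injectivity of $\mathbb T$.
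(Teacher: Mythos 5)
Your argument is correct and follows essentially the same route as the paper's: both rest on the Pontryagin duality $H_{n-1}(F;G^*)\cong\Hom(H^{n-1}(F;G),\mathbb S^1)$ (which the paper simply cites from Hurewicz--Wallman, Chap.~VIII, \S 4, rather than rederiving via nerves and limits as you sketch), its naturality under the inclusion $F\hookrightarrow X$, and the separation of a non-extendable class from the image of $j^{n-1}_{X,F}$ by a character, using divisibility of the circle. Your write-up is a more careful version of the same proof; no substantive difference.
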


\begin{proof}
The proof follows from the fact that for any space $Z$ and any $k\geq 0$ the group $H_{k}(Z;G^*)$ is the dual group of $H^{k}(Z;G)$, where both $G$ and $H^{k}(Z;G)$ are equipped with discrete topology, see \cite[Chap.VIII,\S4]{hw}. Indeed,
if $n\in\mathcal{CH}_{X,G}$, then there exist a closed set $A\subset X$ and a non-trivial element $\gamma\in H^{n-1}(A;G)$ with
$\gamma\not\in j^{n-1}_{X,A}(H^{n-1}(X;G))$. Choose a homomorphism $\alpha: H^{n-1}(A;G)\to\mathbb S^1$ such that $\alpha(\gamma)\neq 0$ and
$\alpha\big(j^{n-1}_{X,A}(H^{n-1}(X;G))\big)=0$. Because of the duality mentioned above, $\alpha$ can be treated as a non-trivial element of $H_{n-1}(A;G^*)$ such that $i^{n-1}_{A,X}=\alpha\circ j^{n-1}_{X,A}$. So, $i^{n-1}_{A,X}(\alpha)=0$, which yields  $n\in\mathcal{H}_{X,G^*}$.

If $n\in\mathcal{H}_{X,G^*}$, we take a closed set $A\subset X$ and a non-trivial $\alpha\in H_{n-1}(A;G^*)$ with $i^{n-1}_{A,X}(\alpha)=0$. We consider $\alpha$ as a non-trivial element of $H^{n-1}(A;G)^*$. So, $\alpha(\gamma)\neq 0$ for some $\gamma\in H^{n-1}(A;G)$. This implies that
$\gamma\not\in j^{n-1}_{X,A}(H^{n-1}(X;G))$ because $\alpha$ is homologous to zero in $X$.
\end{proof}

\begin{pro}
For every space $X$ and every abelian group $G$ we have $\max\mathcal{CH}_{X,G}=\dim_GX$.
\end{pro}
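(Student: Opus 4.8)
The plan is to prove the two–sided equivalence
$$k\in\mathcal{CH}_{X,G}\ \Longleftrightarrow\ k\le\dim_GX,$$
from which $\max\mathcal{CH}_{X,G}=\dim_GX$ is immediate. The main device is the Eilenberg--MacLane characterization of cohomological dimension quoted in the introduction: $\dim_GX\le m$ precisely when every map $f\colon A\to K(G,m)$ from a closed set $A\subset X$ extends over $X$. Combined with the representability $H^m(A;G)\cong[A,K(G,m)]$ of \v{C}ech cohomology on the class of metric compacta, natural with respect to restriction, this says that $\dim_GX\le m$ holds if and only if the restriction homomorphism $j^m_{X,A}\colon H^m(X;G)\to H^m(A;G)$ is surjective for every closed $A\subset X$.

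From here I would simply negate. The statement $\dim_GX\ge m+1$ holds if and only if there is a closed set $A\subset X$ with $\operatorname{coker}j^m_{X,A}\neq 0$, i.e.\ with a nontrivial class $\alpha\in H^m(A;G)$ that is not extendable over $X$; by the very definition of $\mathcal{CH}_{X,G}$ this is exactly the assertion $m+1\in\mathcal{CH}_{X,G}$. Reindexing by $k=m+1$ gives $k\in\mathcal{CH}_{X,G}$ iff $\dim_GX\ge k$, so $\mathcal{CH}_{X,G}=\{\,k\ge 1:k\le\dim_GX\,\}$ and therefore $\max\mathcal{CH}_{X,G}=\dim_GX$ (the case $\dim_GX=\infty$ being covered by the same equivalence, both sides then being unbounded).

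As an independent check on one of the inequalities, the bound $\max\mathcal{CH}_{X,G}\le\dim_GX$ can be obtained purely from the long exact cohomology sequence of the pair $(X,F)$, without invoking $K(G,m)$ at all. In
$$H^{k-1}(X;G)\xrightarrow{\,j^{k-1}_{X,F}\,}H^{k-1}(F;G)\xrightarrow{\,\delta\,}H^{k}(X,F;G)\xrightarrow{\,p\,}H^{k}(X;G)$$
one has $\operatorname{coker}j^{k-1}_{X,F}\cong\operatorname{im}\delta=\ker p$, so the existence of a non-extendable class in $H^{k-1}(F;G)$ forces $H^{k}(X,F;G)\neq 0$ and hence $k\le\dim_GX$. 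The lower bound $\dim_GX\in\mathcal{CH}_{X,G}$, however, genuinely needs the Eilenberg--MacLane characterization, since a single long exact sequence does not detect that a non-surjective $j^{n-1}_{X,F}$ must exist once $\dim_GX=n$.

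The step I expect to require the most care is precisely the identification, at the level of \v{C}ech cohomology of compacta, of ``extendability of a map $A\to K(G,m)$ over $X$'' with ``surjectivity of $j^m_{X,A}$''. Here I would appeal to the representability of \v{C}ech cohomology by Eilenberg--MacLane spaces on metric compacta together with the naturality of this representation under the inclusion $A\hookrightarrow X$, which guarantees that the fundamental-class correspondence carries an extension $\tilde f\colon X\to K(G,m)$ of $f$ exactly to a preimage of $f^*\iota$ under $j^m_{X,A}$. The remaining low-dimensional and empty-set bookkeeping (reduced versus unreduced $H^0$, and the case $\dim_GX=0$) is routine once this correspondence is in place.
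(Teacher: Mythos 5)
Your argument is correct, but for the lower bound it takes a genuinely different route from the paper's proof of this proposition. The upper bound $\max\mathcal{CH}_{X,G}\le\dim_GX$ is identical in both: exactness of $H^{k-1}(X;G)\to H^{k-1}(F;G)\to H^{k}(X,F;G)$ shows a non-extendable class forces $H^{k}(X,F;G)\neq 0$. For the reverse inequality the paper invokes a local theorem of Kuz'minov (\cite[Theorem 2]{ku}): when $\dim_GX=n$ there is a closed $Y\subset X$ and points of $Y$ with a basis of open sets $W$ in $Y$ for which $j^{n-1}_{\overline W,\bd_YW}$ is never surjective, whence $j^{n-1}_{X,\bd_YW}$ is not surjective and $n\in\mathcal{CH}_{X,G}$. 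You instead use the global characterization that $\dim_GX\le m$ iff $j^{m}_{X,A}$ is surjective for every closed $A$, obtained from the Eilenberg--MacLane extension criterion and the representability $H^m(A;G)\cong[A,K(G,m)]$; negating it gives $k\in\mathcal{CH}_{X,G}$ iff $k\le\dim_GX$. Your route is in fact the one the paper itself uses one step later, in the proof of Corollary 2.4, where exactly this ``well known fact'' is cited to show $[1,\dim_GX]\subset\mathcal{CH}_{X,G}$; so your proof delivers the stronger conclusion $\mathcal{CH}_{X,G}=[1,\dim_GX]$ in one pass and makes Corollary 2.4(2) immediate, at the cost of relying on the $K(G,m)$ machinery rather than the more self-contained separator-type statement the paper quotes. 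What Kuz'minov's local version buys the paper is information about \emph{where} the non-extendable classes live (on boundaries of arbitrarily small neighborhoods of a point), which is in the spirit of the local analysis carried out elsewhere in the paper. The only point you should make explicit is the case $k=1$ (i.e.\ $m=0$), where the extension criterion and the reduced group $H^{0}$ require the usual separate treatment; this is a genuinely degenerate case rather than a gap in the main argument.
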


\begin{proof}
The definition of the set $\mathcal{CH}_{X,G}$ and the exact sequence
$$
\xymatrix{
\to H^{n-1}(X;G)\to H^{n-1}(F;G)\to H^{n}(X,F;G)\to\\
}
$$
imply that if $n\in\mathcal{CH}_{X,G}$, then $H^{n}(X,F;G)\neq 0$. Thus, $\max\mathcal{CH}_{X,G}\leq\dim_GX$.

On the other hand, if $\dim_GX=n$, then there exists a closed set $Y\subset X$ and points $x\in Y$ possessing a basis of open in $Y$ sets $W$ such that all homomorphisms $j^{n-1}_{\overline W,bd_YW}: H^{n-1}(\overline W;G)\to H^{n-1}(bd_YW;G)$ are not surjective, see \cite[Theorem 2]{ku}. Hence, the homomorphisms
$j^{n-1}_{X,bd_YW}:H^{n-1}(X;G)\to H^{n-1}(bd_YW;G)$ are also not surjective, which implies that $n\in\mathcal{CH}_{X,G}$.
\end{proof}

\begin{cor}
For every countable abelian group $G$ and a space $X$ we have:
\begin{itemize}
\item[(1)] $d_{G^*}X=\dim_GX$;
\item[(2)] If $\dim_GX$ is finite, then $\mathcal{H}_{X,G^*}=[1,\dim_GX]$.
\end{itemize}
\end{cor}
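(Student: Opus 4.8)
The plan is to reduce the whole corollary to the two preceding propositions. For part (1) I would simply observe that $d_{G^*}X=\max\mathcal H_{X,G^*}$ by definition, that $\mathcal H_{X,G^*}=\mathcal{CH}_{X,G}$ by Proposition 2.2 (this is where countability of $G$ is used), and that $\max\mathcal{CH}_{X,G}=\dim_GX$ by Proposition 2.3. Chaining these three identities yields $d_{G^*}X=\dim_GX$ immediately.

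For part (2), again by Proposition 2.2 it suffices to prove $\mathcal{CH}_{X,G}=[1,\dim_GX]$, since $\mathcal H_{X,G^*}=\mathcal{CH}_{X,G}$. Write $n=\dim_GX$. The inclusion $\mathcal{CH}_{X,G}\subseteq[1,n]$ is immediate from $\max\mathcal{CH}_{X,G}=n$ (Proposition 2.3). For the reverse inclusion I would fix $k$ with $1\le k\le n$ and produce a closed subset $Y\subseteq X$ with $\dim_GY=k$; applying Proposition 2.3 to $Y$ gives $k=\max\mathcal{CH}_{Y,G}$, so there are a closed $F\subseteq Y$ and a class $\alpha\in H^{k-1}(F;G)$ with $\alpha\notin j^{k-1}_{Y,F}(H^{k-1}(Y;G))$. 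The key glue is then to promote non-extendability over $Y$ to non-extendability over $X$: since $F\subseteq Y\subseteq X$, functoriality of restriction gives $j^{k-1}_{X,F}=j^{k-1}_{Y,F}\circ j^{k-1}_{X,Y}$, whence $\mathrm{im}\,j^{k-1}_{X,F}\subseteq\mathrm{im}\,j^{k-1}_{Y,F}$ and therefore $\alpha\notin j^{k-1}_{X,F}(H^{k-1}(X;G))$. Thus $k\in\mathcal{CH}_{X,G}$, giving $[1,n]\subseteq\mathcal{CH}_{X,G}$ and the desired equality.

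The one nontrivial input, and the main obstacle, is the existence of a closed subset of each prescribed cohomological dimension $k\le n$; equivalently, the intermediate-value property of $\dim_G$: if $\dim_GX=n<\infty$ then every integer $0\le k\le n$ is realized as $\dim_GF$ for some closed $F\subseteq X$. I note that one cannot route this through the covering dimension, since $\dim_GX<\infty$ does not force $\dim X<\infty$. Instead I would invoke the standard structure theory of cohomological dimension, namely monotonicity under closed subsets together with the fact that the dimension can be made to drop by exactly one on a suitable closed subset, for which I would cite Kuzminov \cite{ku}; iterating this from $X$ downward realizes every intermediate value. Once this realization statement is in hand, the functoriality argument above is mere bookkeeping, so essentially the entire weight of the corollary rests on it (and, for part (1), on already having $\max\mathcal{CH}_{X,G}=\dim_GX$).
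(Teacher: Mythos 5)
Your part (1) is exactly the paper's argument: chain the definition $d_{G^*}X=\max\mathcal H_{X,G^*}$ with Proposition 2.2 (where countability of $G$ enters) and Proposition 2.3. For part (2), the reduction to showing $\mathcal{CH}_{X,G}=[1,\dim_GX]$ via Proposition 2.2 is also what the paper does, and your promotion step is sound: if $\alpha\in H^{k-1}(F;G)$ is not extendable over a closed $Y\supseteq F$ then, since $j^{k-1}_{X,F}=j^{k-1}_{Y,F}\circ j^{k-1}_{X,Y}$, it is not extendable over $X$. The gap is the step you yourself flag as carrying the whole weight: the claim that every integer $k\le\dim_GX$ is realized as $\dim_GY$ for a closed $Y\subseteq X$. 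This intermediate-value property for $\dim_G$ with an arbitrary abelian coefficient group is substantially harder than the corollary it is meant to prove; you give no argument for it, and it is not an off-the-shelf citation (monotonicity of $\dim_G$ under closed subsets is standard, but ``dropping the dimension by exactly one on a suitable closed subset'' is not a result you can simply point to in Kuz'minov in this generality). As written, the proof of part (2) is therefore incomplete.

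The paper sidesteps all of this with one elementary, standard fact that your argument never invokes: $\dim_GX\le k$ if and only if the restriction $j^{k}_{X,F}\colon H^{k}(X;G)\to H^{k}(F;G)$ is surjective for every closed $F\subseteq X$ (this is the extension characterization of $\dim_G$ read through $[F,K(G,k)]\cong H^{k}(F;G)$). Given $k\in[1,\dim_GX]$, one has $\dim_GX>k-1$, so by the negation of this criterion there exist a closed $F\subseteq X$ and a class $\alpha\in H^{k-1}(F;G)$ with $\alpha\notin j^{k-1}_{X,F}\bigl(H^{k-1}(X;G)\bigr)$, which is literally the statement $k\in\mathcal{CH}_{X,G}$. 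No closed subsets of prescribed cohomological dimension are needed; replacing your intermediate-value step by this criterion repairs and drastically shortens the argument.
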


\begin{proof}
The first item follows from Propositions 2.2 - 2.3. Obviously, the second item also follows from Propositions 2.2 - 2.3 provided
$\mathcal{CH}_{X,G}=[1,\dim_GX]$. The last equality follows from the inclusion $[1,\dim_GX]\subset\mathcal{CH}_{X,G}$. So, let $n\in [1,\dim_GX]$. Since $\dim_GX > n-1$, there exists a closed set$A\subset X$ and
a non-trivial $\gamma\in H^{n-1}(A;G)$ with $\gamma\not\in j^{n-1}_{X,A}(H^{n-1}(X;G))$ (we use the following well known fact: $\dim_GX\leq k$ if and only if
the homomorphism $j^k_{X,F}:H^k(X;G)\to H^k(F;G)$ is surjective for every closed set $F\subset X$). Hence, $[1,\dim_GX]\subset\mathcal{CH}_{X,G}$.
\end{proof}

The well known Bockstein theorem (see \cite{dr}, \cite{ku}) states that for every group $G$ there is a set of countable groups $\sigma(G)$ such that $\dim_GX=\sup\{\dim_HX:H\in\sigma(G)\}$ for any space $X$. Corollary 2.4(1) yields a similar equality for $d_{G^*}X$.

\begin{cor}
For any space $X$ and any countable abelian group $G$ we have $d_{G^*}X=\sup\{d_{H^*}X:H\in\sigma(G)\}$
\end{cor}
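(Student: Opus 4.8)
The plan is to derive this equality directly from Corollary 2.4(1) together with the Bockstein theorem quoted just above the statement, without any further homological work. First I would apply Corollary 2.4(1) to the countable group $G$ itself, which gives $d_{G^*}X=\dim_GX$. Next I would invoke the Bockstein theorem, which furnishes a set $\sigma(G)$ of groups with $\dim_GX=\sup\{\dim_HX:H\in\sigma(G)\}$. Chaining these two identities reduces the problem to replacing each $\dim_HX$ on the right-hand side by $d_{H^*}X$.

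The crucial observation making this replacement legitimate is that the Bockstein set $\sigma(G)$ consists entirely of \emph{countable} groups, as the statement of the Bockstein theorem asserts. This is exactly the hypothesis under which Corollary 2.4(1) applies, so for each $H\in\sigma(G)$ we have $\dim_HX=d_{H^*}X$. Substituting this into the Bockstein equality then yields
$$
d_{G^*}X=\dim_GX=\sup\{\dim_HX:H\in\sigma(G)\}=\sup\{d_{H^*}X:H\in\sigma(G)\},
$$
which is the desired formula.

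I do not anticipate any genuine obstacle here: the argument is a two-line composition of previously established facts. The only point that requires a moment's attention—and which I would make explicit—is the countability of the members of $\sigma(G)$, since Corollary 2.4(1) is stated only for countable abelian groups; this is precisely why the Bockstein theorem is phrased in terms of countable groups, and so the hypotheses match without any additional reduction.
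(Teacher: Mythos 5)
Your argument is correct and is precisely the one the paper intends (the corollary is stated without a written-out proof, but the preceding sentence indicates exactly this chaining of Corollary 2.4(1) with the Bockstein theorem). Your explicit remark that the members of $\sigma(G)$ are countable, so that Corollary 2.4(1) applies to each $H\in\sigma(G)$, is the only point needing care and you have handled it.
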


Next proposition is a non-metrizable homological analogue of Theorem 14 from \cite[chapter IV,\S6]{a} and Property 5 from \cite{bb}.
\begin{lem}\label{membrane}
Let $B\subset A$ be a compact pair and $H_{n-1}(B;G)$ contains a non-trivial element $\gamma$, which is homologous to zero in $A$. Then there exists a closed set $K\subset A$ containing $B$ such that $K$ is a homological membrane for $\gamma$.
\end{lem}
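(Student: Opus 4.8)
The plan is to realize $K$ as a \emph{minimal} closed set in which $\gamma$ dies, produced by (the dual form of) Zorn's lemma; the only delicate point is that minimality is compatible with passage to intersections of chains, and this is exactly where continuity of \v{C}ech homology enters.

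First I would consider the family
$$\mathcal{K}=\{\,C:\ B\subseteq C\subseteq A,\ C\ \text{closed},\ i^{n-1}_{B,C}(\gamma)=0\,\},$$
partially ordered by inclusion. Since $\gamma$ is homologous to zero in $A$ we have $i^{n-1}_{B,A}(\gamma)=0$, so $A\in\mathcal{K}$ and $\mathcal{K}\neq\varnothing$. A minimal element $K$ of $\mathcal{K}$ is, by the very definition, a closed set with $B\subseteq K$ in which $\gamma$ is homologous to zero, while $\gamma$ fails to be homologous to zero in every proper closed subset of $K$ containing $B$; that is, $K$ is a homology membrane for $\gamma$. Hence it suffices to produce a minimal element, and for that I would verify that every chain in $\mathcal{K}$ has a lower bound in $\mathcal{K}$.

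This verification is the heart of the matter. Given a chain $\{C_\lambda\}\subseteq\mathcal{K}$, put $C=\bigcap_\lambda C_\lambda$. As an intersection of closed subsets of the compactum $A$, the set $C$ is closed and contains $B$, so the only thing to check is $i^{n-1}_{B,C}(\gamma)=0$. Here I would invoke the continuity of \v{C}ech homology: since $\{C_\lambda\}$ is a decreasing family of compacta with intersection $C$, the natural map $H_{n-1}(C;G)\to\varprojlim_\lambda H_{n-1}(C_\lambda;G)$ is an isomorphism. Setting $\eta=i^{n-1}_{B,C}(\gamma)$ and using functoriality of the inclusion-induced maps, the image of $\eta$ under the projection to $H_{n-1}(C_\lambda;G)$ equals $i^{n-1}_{C,C_\lambda}(\eta)=i^{n-1}_{B,C_\lambda}(\gamma)=0$ for every $\lambda$. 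An element of an inverse limit all of whose coordinates vanish is zero, so $\eta=0$ and $C\in\mathcal{K}$. Thus $C$ is the desired lower bound, and the dual form of Zorn's lemma produces a minimal $K\in\mathcal{K}$, which is the sought membrane.

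The single nontrivial ingredient is the continuity used above, and it is precisely the reason \v{C}ech (rather than exact) homology is the right tool for this construction. Because \v{C}ech homology is defined as an inverse limit over nerves of open covers, it is genuinely continuous on decreasing systems of compact Hausdorff spaces, with no $\varprojlim^1$ correction term; by contrast the exact homology of Section 4 would contribute a $\varprojlim^1$ kernel to the comparison map, which is exactly what must be avoided. The only point requiring care in justifying continuity is the standard cofinality remark that, by compactness, every finite open cover of $C=\bigcap_\lambda C_\lambda$ is the restriction of an open cover of some $C_{\lambda_0}$; granting this, the isomorphism $H_{n-1}(C;G)\cong\varprojlim_\lambda H_{n-1}(C_\lambda;G)$ follows by interchanging the two inverse limits, and the rest is the routine Zorn-type minimization.
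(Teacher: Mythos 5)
Your proof is correct and follows essentially the same route as the paper: Zorn's lemma applied to the family of closed sets between $B$ and $A$ in which $\gamma$ dies, with the chain condition reduced to the vanishing of $i^{n-1}_{B,C}(\gamma)$ over the intersection of a chain. The only difference is that you invoke the Eilenberg--Steenrod continuity theorem for \v{C}ech homology on compact pairs, whereas the paper verifies the same vanishing by hand via the stabilization of the nerves $\mathrm{N}_{\omega|F_\alpha}$ inside the finite complex $\mathrm{N}_\omega$; both are valid, and your closing remark about why exact homology would fail here is apt.
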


\begin{proof}
 Consider the family $\mathcal F$ of all closed sets $F\subset A$ containing $B$ such that $i_{B,F}^{n-1}(\gamma)=0$. Obviously, $A\in\mathcal F$ and any minimal element of $\mathcal F$ is a homological membrane for $\gamma$. To show that $\mathcal F$ has a minimal element, according to Zorn's lemma, we need the following claim.

 \smallskip
\textit{Claim $1$.
Suppose $\{F_\alpha:\alpha\in\Lambda\}$ is an infinite linearly ordered decreasing subfamily of $\mathcal F$ and $F=\bigcap F_\alpha$. Then $F\in\mathcal F$.}

\smallskip
Denote by $\mathcal C_1$ the family of all finite open covers of $F_1$ and let $\mathcal C_\alpha$, $\mathcal C$ be the restrictions of $\mathcal C_1$ on the sets $F_\alpha$ and $F$, respectively. The set $\mathcal C_1$ becomes directed with respect to the relation: $\omega'\prec\omega''$ iff $\omega''$ is a refinement of $\omega'$. For every $\omega\in\mathcal C_1$  let $\mathrm{N}_\omega$ be the nerve of $\omega$. There is a natural simplicial map
$p_{\omega'',\omega'}:\mathrm{N}_{\omega''}\to\mathrm{N}_{\omega'}$ provided $\omega'\prec\omega''$. The maps $p_{\omega'',\omega'}$ induce corresponding
homomorphisms $\pi_{\omega'',\omega'}^k:H_k(\mathrm{N}_\omega'';G)\to H_k(\mathrm{N}_\omega';G)$ for every $k\geq 0$. Therefore,
we have the inverse systems $\mathcal S_1^k=\{H_k(\mathrm{N}_\omega;G),\pi_{\omega',\omega}^k:\omega\prec\omega'\}$, $k\geq 0$, such that $H_k(F_1;G)$ is the limit of $\mathcal S_1^k$, see \cite{es}. The sets $\mathcal C$ and $\mathcal C_\alpha$ also generates the nerves $\mathrm{N}_{\omega|F}$, $\mathrm{N}_{\omega|F_\alpha}$ and the inverse systems $$\mathcal S^k=\{H_k(\mathrm{N}_{\omega|F};G),\pi_{\omega'|F,\omega|F}^k:\omega\prec\omega'\}$$ and
$$\mathcal S_\alpha^k=\{H_k(\mathrm{N}_{\omega|F_\alpha};G),\pi_{\omega'|F_\alpha,\omega|F_\alpha}^k:\omega\prec\omega'\},$$
where $\omega|F$ and $\omega|F_\alpha$ denote the restrictions of $\omega$ on the sets $F$ and $F_\alpha$, respectively. It is easily seen that for every open cover $\delta$ of $F_\alpha$ there exists $\omega\in\mathcal C_1$ such that $\omega|F_\alpha$ refines $\delta$. Consequently, $H_k(F_\alpha;G)=\varprojlim\mathcal S^k_\alpha$. Similarly, $H_k(F;G)=\varprojlim\mathcal S^k$. Note that each $\mathrm{N}_{\omega|F_\alpha}$ is a subcomplex of the finite complex $\mathrm{N}_\omega$, $\omega\in\mathcal C_1$. Moreover, if $F_\alpha\subset F_\beta$ for some $\alpha,\beta\in\Lambda$, then $\mathrm{N}_{\omega|F_\alpha}$ is a subcomplex of $\mathrm{N}_{\omega|F_\beta}$. So, the families $\{\mathrm{N}_{\omega|F_\alpha}:\alpha\in\Lambda\}$, $\omega\in\mathcal C_1$, are decreasing and each one of them consists of finite simplicial complexes.
Because $\Lambda$ is infinite, for every $\omega\in\mathcal C_1$ there is $\alpha(\omega)\in\Lambda$ such that $\mathrm{N}_{\omega|F_\beta}=\mathrm{N}_{\omega|F_{\alpha(\omega)}}=\mathrm{N}_{\omega|F}$ for all $\beta\succ\alpha(\omega)$.

Now, we can complete the proof of Claim 1. Let $i_{B,F}^{n-1}(\gamma)=\gamma_F$ and $\omega\in\mathcal C_1$. Then, according to the above notations,
$\mathrm{N}_{\omega|F}=\mathrm{N}_{\omega|F_{\alpha(\omega)}}$. Consider also the projections $\pi_{F,\omega}:H_{n-1}(F;G)\to H_{n-1}(\mathrm{N}_{\omega|F};G)$ and $\pi_{F_{\alpha(\omega)},\omega}:H_{n-1}(F_{\alpha(\omega)};G)\to H_{n-1}(\mathrm{N}_{\omega|F_{\alpha(\omega)}};G)$. Then we have the commutative diagram { $$
\begin{CD}
H_{n-1}(F;G)@>{{i^{n-1}_{F,F_{\alpha(\omega)}}}}>>H_{n-1}(F_{\alpha(\omega)};G)\\
@VV{\pi_{F,\omega}}V@VV{\pi_{F_{\alpha(\omega)},\omega}}V\\
H_{n-1}(\mathrm{N}_{\omega|F};G)@>{{i^{n-1}_{\mathrm{N}_{\omega|F},\mathrm{N}_{\omega|F_{\alpha(\omega)}}}}}>> H_{n-1}(\mathrm{N}_{\omega|F_{\alpha(\omega)}};G)@.
\end{CD}
$$}\\

Since $i^{n-1}_{F,F_{\alpha(\omega)}}(\gamma_F)=i_{B,F_\alpha(\omega)}^{n-1}(\gamma)=0$ and $i^{n-1}_{\mathrm{N}_{\omega|F},\mathrm{N}_{\omega|F_{\alpha(\omega)}}}$ is the identity, we obtain $\pi_{F,\omega}(\gamma_F)=0$. The last equality (being true for each $\omega\in\mathcal C_1$) implies $\gamma_F=0$. Hence, $F\in\mathcal F$.
\end{proof}

\begin{cor}
Let $X$ be a Hausdorff compactum with $n\in\mathcal{H}_{X,G}$. Then there exists a pair $F\subset K$ of closed subsets of $X$ such that $K$ is a homological membrane for some non-trivial element of $H_{n-1}(F;G)$.
\end{cor}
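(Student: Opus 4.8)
The plan is to read the required pair directly off the definition of $\mathcal H_{X,G}$ and then invoke Lemma \ref{membrane}; no new construction is needed. First I would unwind the hypothesis $n\in\mathcal H_{X,G}$: by the very definition of this set, there exist a closed set $F\subset X$ and a non-trivial element $\gamma\in H_{n-1}(F;G)$ that is homologous to zero in $X$, that is, $i^{n-1}_{F,X}(\gamma)=0$.

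Next I would apply Lemma \ref{membrane} with $B=F$ and $A=X$. Since $X$ is a Hausdorff compactum and $F\subset X$ is closed, $(F,X)$ is a compact pair in the sense required by the lemma; moreover $H_{n-1}(F;G)$ contains the non-trivial element $\gamma$, which is homologous to zero in $X$, so the hypotheses of the lemma are satisfied. The lemma then produces a closed set $K\subset X$ with $F\subset K$ such that $K$ is a homological membrane for $\gamma$. Taking this $F$ and $K$ yields the asserted pair $F\subset K$ of closed subsets of $X$, with $K$ a homological membrane for the non-trivial element $\gamma\in H_{n-1}(F;G)$, as required.

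There is essentially no obstacle here: all the substantive content---namely, constructing a minimal closed set on which $\gamma$ continues to vanish and verifying this minimality through the inverse-limit argument over nerves of finite open covers for \v{C}ech homology on compacta---has already been carried out in the proof of Lemma \ref{membrane}. The only thing to verify is that the membership $n\in\mathcal H_{X,G}$ supplies exactly the data $(B,A,\gamma)=(F,X,\gamma)$ needed as input to that lemma, and this is immediate from the definition of $\mathcal H_{X,G}$. Thus the corollary is a direct specialization of Lemma \ref{membrane} from an abstract compact pair to the pair determined by the set $\mathcal H_{X,G}$.
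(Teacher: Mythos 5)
Your proof is correct and is essentially identical to the paper's own argument: both unwind the definition of $\mathcal H_{X,G}$ to obtain the closed set $F$ and the non-trivial $\gamma\in H_{n-1}(F;G)$ homologous to zero in $X$, and then apply Lemma \ref{membrane} to the pair $F\subset X$. No further comment is needed.
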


\begin{proof}
Since $n\in\mathcal{H}_{X,G}$, there exists a proper closed subset $F\subset X$ and a non-trivial $\gamma\in H_{n-1}(F;G)$ such that $\gamma$ is $G$-homologous to zero in $X$. Then apply Lemma \ref{membrane} for the pair $F\subset X$.
\end{proof}

\begin{cor}
Let $X$ be a space and $G$ any group. Then,
$n\in\mathcal{H}_{X,G}$ if and only if there exists $x\in X$ and a local base $\mathcal B_x$ at $x$ consisting of open sets $U$ with each
$\bd\,\overline U$ containing a closed set $F_U$ such that $i^{n-1}_{F_U,\overline U}(\gamma_U)=0$ for some non-trivial $\gamma_U\in H_{n-1}(F_U;G)$.
\end{cor}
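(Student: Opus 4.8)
The two directions have completely different weight: the reverse implication is a one-line factorization, while the forward one carries all the content. I would begin by disposing of sufficiency. Suppose such a point $x$ and local base $\mathcal B_x$ exist, and fix any $U\in\mathcal B_x$. Then $F_U\subset\bd\,\overline U\subset\overline U\subset X$, so the inclusion $F_U\hookrightarrow X$ factors through $\overline U$ and $i^{n-1}_{F_U,X}=i^{n-1}_{\overline U,X}\circ i^{n-1}_{F_U,\overline U}$. Since $i^{n-1}_{F_U,\overline U}(\gamma_U)=0$, the non-trivial class $\gamma_U\in H_{n-1}(F_U;G)$ is homologous to zero in $X$, whence $n\in\mathcal H_{X,G}$. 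A single $U$ already suffices here; the local-base clause is needed only for necessity.

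For \emph{necessity} the plan is to feed the hypothesis into a membrane. Assuming $n\in\mathcal H_{X,G}$, I pick a closed $F\subset X$ and a non-trivial $\gamma\in H_{n-1}(F;G)$ killed by $i^{n-1}_{F,X}$, and apply Lemma~\ref{membrane} to the pair $F\subset X$ to obtain a closed homological membrane $K$ for $\gamma$ spanned on $F$. Since $\gamma\neq 0$ in $H_{n-1}(F;G)$ while $i^{n-1}_{F,K}(\gamma)=0$, one has $F\subsetneq K$, so I fix $x\in K\setminus F$; the neighbourhoods $U$ of $x$ with $\overline U\cap F=\varnothing$ form a local base $\mathcal B_x$ at $x$, and I must produce $F_U,\gamma_U$ for each such $U$. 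Fixing $U$, I split $X=\overline U\cup\overline{X\setminus\overline U}$ and intersect with $K$ to get the closed cover $K=K_P\cup K_Q$, where $K_P=K\cap\overline U$ and $K_Q=K\cap\overline{X\setminus\overline U}$, with $K_P\cap K_Q=K\cap\bd\,\overline U=:F_U\subset\bd\,\overline U$. Because $\overline U\cap F=\varnothing$ one checks $F\subset K_Q\subset K\setminus U$, and $K\setminus U$ is a \emph{proper} closed subset of $K$ containing $F$ (proper since $x\in K\cap U$). The membrane property then gives $i^{n-1}_{F,K\setminus U}(\gamma)\neq 0$; as this map factors through $K_Q$, the class $\gamma_Q:=i^{n-1}_{F,K_Q}(\gamma)$ is non-trivial, while $i^{n-1}_{K_Q,K}(\gamma_Q)=i^{n-1}_{F,K}(\gamma)=0$.

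It then remains to transport $\gamma_Q$ to a class living on $F_U$, and here I would invoke Mayer–Vietoris for the triad $(K;K_P,K_Q)$, whose relevant stretch reads
\[
H_{n-1}(F_U;G)\xrightarrow{\ \psi\ }H_{n-1}(K_P;G)\oplus H_{n-1}(K_Q;G)\xrightarrow{\ \phi\ }H_{n-1}(K;G),
\]
with $(0,\gamma_Q)\in\ker\phi$. Exactness at the middle term would yield $\eta\in H_{n-1}(F_U;G)$ with $\psi(\eta)=(0,\gamma_Q)$; then $i^{n-1}_{F_U,K_P}(\eta)=0$ forces $i^{n-1}_{F_U,\overline U}(\eta)=0$ (the inclusion factors through $K_P\subset\overline U$), while $i^{n-1}_{F_U,K_Q}(\eta)=\gamma_Q\neq 0$ makes $\eta$ non-trivial. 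Setting $\gamma_U:=\eta$ and letting $U$ range over $\mathcal B_x$ would complete the construction.

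The one non-formal point — and the main obstacle — is precisely the exactness of this Mayer–Vietoris sequence at its middle term. For \v{C}ech homology this is \emph{not} automatic: writing $H_*$ as the inverse limit of nerve homologies, exactness at the middle survives the limit only up to a $\varprojlim^1$ obstruction built from the kernels of the nerve-level maps, which is exactly the failure of exactness flagged in the Introduction. I would circumvent it in the paper's standard way: run the argument in the exact homology $\widehat H_*$ of Section 4, a genuine (exact) theory that supplies a boundary class $\widehat\eta\in\widehat H_{n-1}(F_U;G)$, and then push down along the natural surjection $T$ of \cite{sk}. Naturality of $T$ preserves the vanishing $i^{n-1}_{F_U,K_P}(\eta)=0$, and non-triviality of $\eta=T(\widehat\eta)$ is read off from its image $\gamma_Q\neq 0$ in $H_{n-1}(K_Q;G)$. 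The delicate step is to feed the hypothesis into $\widehat H_*$, i.e.\ to realise $\gamma_Q$ by an exact class still dying in $\widehat H_{n-1}(K;G)$, and this is where the coincidence results for $T$ (Proposition 4.4) and the vanishing of the relevant derived limits must be used.
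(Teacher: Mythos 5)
Your sufficiency argument and the first half of your necessity argument coincide with the paper's: the paper also dismisses the ``if'' direction as obvious, also produces a homology membrane $K\supset F$ for $\gamma$ via Lemma~\ref{membrane}, and also works at a point $x\in K\setminus F$ with small neighbourhoods $U$, splitting $K$ along $\bd\,\overline U$. Where the two diverge is the decisive step of producing $\gamma_U$ on $F_U$: the paper does not argue this itself but quotes Property~6 of Bing--Borsuk \cite{bb}, which asserts directly (for \v{C}ech homology) that $\overline{U\cap K}$ is an $(n-1)$-homology membrane spanned on $\bd_K(U\cap K)$ for some class $\gamma_U$, after which $i^{n-1}_{F_U,\overline U}(\gamma_U)=0$ is immediate. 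You instead try to manufacture $\gamma_U$ by a Mayer--Vietoris argument for the triad $(K;K\cap\overline U,\,K\cap\overline{X\setminus\overline U})$.

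That is exactly where your proof has a genuine, unclosed gap, and you have diagnosed it yourself without repairing it. Exactness of the Mayer--Vietoris sequence at the middle term fails for \v{C}ech homology with an arbitrary coefficient group $G$, and the corollary is asserted for \emph{any} $G$. Your proposed repair --- run the argument in the exact homology $\widehat H_*$ and push down along the natural surjection $T$ of \cite{sk} --- founders at the entry point: to start the Mayer--Vietoris argument in $\widehat H_*$ you need a lift $\widehat\gamma_Q\in\widehat H_{n-1}(K_Q;G)$ of $\gamma_Q$ whose image in $\widehat H_{n-1}(K;G)$ vanishes. Surjectivity of $T^{n-1}_{K_Q}$ gives \emph{some} lift, but the relation $T^{n-1}_{K}\bigl(\widehat i^{\,n-1}_{K_Q,K}(\widehat\gamma_Q)\bigr)=i^{n-1}_{K_Q,K}(\gamma_Q)=0$ only places $\widehat i^{\,n-1}_{K_Q,K}(\widehat\gamma_Q)$ in the kernel of $T^{n-1}_{K}$; concluding that it is zero requires injectivity of $T^{n-1}_{K}$, which Proposition~4.4 guarantees only under extra hypotheses ($G$ compact, a vector space over a field, etc.), not for arbitrary $G$. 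The same obstruction reappears if you try to lift $\gamma$ itself at the outset. So the ``delicate step'' you flag in your last sentence is not a technicality to be checked but the actual content of the claim, and as written your argument does not establish it; the paper sidesteps the issue entirely by invoking the Bing--Borsuk membrane machinery, which is developed within \v{C}ech homology. To make your route work you would either have to restrict $G$, or reprove the relevant portion of \cite[Property 6]{bb} by a nerve/inverse-limit argument in the spirit of the proof of Lemma~\ref{membrane}.
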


\begin{proof} Obviously, the existence of a local base with the stated property yields $n\in\mathcal{H}_{X,G}$. So, we need to show the "only if" part. To this end,
let $n\in\mathcal{H}_{X,G}$. Then, there exists a compact pair $F\subset K$ such that $K$ is an $(n-1)$-homology membrane for some $\gamma\in H_{n-1}(F;G)$ (see Lemma 2.6).  According to \cite[Property 6]{bb}, every $x\in K\setminus F$ has a local base $\mathcal B_x$ of open sets $U\subset X$ such that $\overline{U\cap K}$ is an $(n-1)$-homology membrane for an element $\gamma_U\in H^{n-1}(F_U;G)$ spanned on $F_U$, where $F_U=\bd_K(U\cap K)$. Because $i^{n-1}_{F_U,\overline{U\cap K}}(\gamma_U)=0$, $i^{n-1}_{F_U,\overline U}(\gamma_U)=0$.
\end{proof}

A space $X$ is {\em homologically locally connected in dimension $n$} (br., $n-lc$) if for every $x\in X$ and a neighborhood $U$ of $x$ in $X$ there exists a neighborhood $V\subset U$ of $x$ such that the homomorphism $\widetilde{i}_{V,U}^n:\widetilde{H}_n(V;\mathbb Z)\to\widetilde{H}_n(U;\mathbb Z)$ is trivial, where $\widetilde{H}_*(.;\mathbb Z)$ denotes the singular homology groups with integer coefficients ($\widetilde{H}_*(.;\mathbb Z)$ should not be confused with the reduced singular homology). If in the above definition the group $Z$ is replaced by a group $G$, we say that $X$ is $n-lc$ with respect to $G$. One can show that $X\in n-lc$ with respect to any group $G$
provided $X\in k-lc$ for every $k\in\{n,n-1\}$ (for $n\geq 1$ this follows from Proposition 4.8; for $n=0$ it is obvious).
 We say that $X$ is {\em homologically locally connected up to dimension $n$} (br., $X\in lc^n$) provided $X$ is $k-lc$ for all $k\leq n$.

According to \cite[Theorem 1]{mar} and \cite[Proposition 9]{sk}, the homology groups $H_k(X,A;G)$, $\widehat{H}_k(X,A;G)$ and $\widetilde{H}_k(X,A;G)$ are isomorphic for any abelian group $G$ and any $k\leq n$ provided $A\subset X$ is a pair of metric compacta such that both $X$ and $A$ are $lc^{n+1}$, see Propositions 4.2 and 4.3.

It follows from Proposition 4.8 and Proposition 4.2 that any $lc^n$ space $X$ is homologically locally connected up to dimension $n$ with respect to both singular and \v{C}ech homology with arbitrary coefficients, i.e. for every group $G$, a point $x\in X$ and its neighborhood $U$ there exists a neighborhood $V$ of $x$ such that $V\subset U$ and the homomorphisms
$i^k_{V,U}:H_k(V;G)\to H_k(U;G)$ and $\widetilde{i}^k_{V,U}:\widetilde{H}_k(V;G)\to\widetilde{H}_k(U;G)$  are trivial for all $k\leq n$.

\begin{thm}\label{main2}
The following holds for any space $X$ with $\dim X=n$:
\begin{itemize}
\item[(1)] If $d_{G}X=n$ for a torsion free abelian group $G$, then:
\item[(1.1)] there exists a point $x\in X$ and a local base $\mathcal B_x$ at $x$ consisting of open sets $U$ such that $\bd\,\overline U$ is a dimensionally full-valued set of dimension $n-1$, and
\item[(1.2)] $X$ is dimensionally full-valued;
%\item[(2)] If $d_{G}X=n$, where $G$ is any abelian group with a trivial torsion part, then $X$ is dimensionally full-valued;
\item[(2)] If $X$ is dimensionally full-valued and $X$ has a base of $\mathcal B$ of open sets $U$ with $H^{n}(\overline U;\mathbb Z)=0$, then
$d_{G}X=n$ for every field $G$;
\item[(3)] If $X$ is dimensionally full-valued and $lc^{n}$, then $d_{\mathbb Z}X=d_{G}X=n$ for every field $G$.
\end{itemize}
\end{thm}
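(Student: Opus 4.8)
The plan is to run all three parts through the long exact sequence of the pair $(\overline U,\bd\overline U)$ for members $U$ of a base, feeding it the universal coefficient formula and, over a field, the duality $H_k(Z;G)\cong H^k(Z;G)^*$ that follows from Proposition 4.4(i) together with \cite[Theorem 3]{sk} (for a field $G$ the $\Ext$-term vanishes, so $\widehat H_k(Z;G)\cong\Hom_G(H^k(Z;G),G)$, and this equals $H_k(Z;G)$ by Proposition 4.4(i)). I would dispose of (2) first. Since $X$ is dimensionally full-valued, $\dim_GX=\dim_{\mathbb Z}X=n$ for the field $G$, so $d_GX\le n$ by Proposition 2.1 and it remains to produce $n\in\mathcal H_{X,G}$. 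Fix $U\in\mathcal B$; from $H^n(\overline U;\mathbb Z)=0$ and $\dim\overline U\le n$ (whence $H^{n+1}(\overline U;\mathbb Z)=0$) the universal coefficient formula gives $H^n(\overline U;G)=0$, and dually $H_n(\overline U;G)=0$. The segment $H_n(\overline U;G)\to H_n(\overline U,\bd\overline U;G)\xrightarrow{\partial}H_{n-1}(\bd\overline U;G)\xrightarrow{i}H_{n-1}(\overline U;G)$ then shows that $\partial$ is injective, so any nonzero element of $H_n(\overline U,\bd\overline U;G)$ maps to a nontrivial class of $H_{n-1}(\bd\overline U;G)$ lying in $\ker i$, i.e. killed already in $\overline U$ and hence in $X$. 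Thus it suffices to find $U\in\mathcal B$ with $H_n(\overline U,\bd\overline U;G)\ne0$; dualizing, and using $H^n(\overline U;G)=0$, this amounts to the non-surjectivity of $j^{n-1}_{\overline U,\bd\overline U}$, which I would extract from $\dim_G\overline U=n$ via the local characterization of cohomological dimension \cite[Theorem 2]{ku} applied to a sufficiently small $U\in\mathcal B$.

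For (3) I would show the $lc^n$ hypothesis reduces matters to the mechanism of (2), now available integrally. Because $X\in lc^n$, the \v{C}ech, exact and singular homologies coincide through dimension $n$ (Propositions 4.2, 4.3 and 4.8), so the pair sequence above is exact with $\mathbb Z$-coefficients in the relevant range. Local connectivity furnishes a base of open $U$ with $H^n(\overline U;\mathbb Z)=0$; as in (2), $\dim\overline U\le n$ forces $\widehat H_n(\overline U;\mathbb Z)=0$ through \cite[Theorem 3]{sk}, while dimensional full-valuedness gives $\dim_{\mathbb Z}\overline U=n$ and hence $\widehat H_n(\overline U,\bd\overline U;\mathbb Z)\ne0$, producing a nontrivial integral class of $H_{n-1}(\bd\overline U;\mathbb Z)$ killed in $X$. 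Therefore $n\in\mathcal H_{X,\mathbb Z}$ and $d_{\mathbb Z}X=n$; the same base also satisfies the hypotheses of (2), giving $d_GX=n$ for every field $G$.

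Part (1) is the substantive one. From $d_GX=n$ we have $n\in\mathcal H_{X,G}$, and Corollary 2.8 supplies a point $x$ and a local base $\mathcal B_x$ of open $U$ together with closed $F_U\subset\bd\overline U$ carrying a nontrivial $\gamma_U\in H_{n-1}(F_U;G)$ with $i^{n-1}_{F_U,\overline U}(\gamma_U)=0$; refining $\mathcal B_x$ I may assume $\dim\bd\overline U\le n-1$. Since \v{C}ech homology vanishes above the covering dimension, the nontrivial top-degree class $\gamma_U$ forces $\dim F_U\ge n-1$, so $\dim\bd\overline U=n-1$. For (1.1) I must upgrade this to full-valuedness of $\bd\overline U$, i.e. (by the Bockstein criterion \cite{dr},\cite{ku}) to $\dim_{\mathbb Q}\bd\overline U=\dim_{\mathbb Z/p}\bd\overline U=n-1$ for every prime $p$; I would detect $\gamma_U$ in the field homologies of $F_U$ through the coefficient homomorphisms induced by $G\to G\otimes\mathbb Q$ and by the reductions attached to $G/pG$, using that $G$ is torsion free so that these maps do not annihilate a top-degree class. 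For (1.2) I would run the same coefficient detection for the class as it sits in $X$, concluding $\dim_{\mathbb Q}X=\dim_{\mathbb Z/p}X=n$ and hence, again by the Bockstein criterion, that $X$ is dimensionally full-valued; alternatively (1.2) can be bootstrapped from (1.1) by an induction on $\dim X$ that propagates full-valuedness from the $(n-1)$-dimensional boundaries to $X$.

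The main obstacle is precisely this coefficient translation in (1): passing from a single nontrivial, bounding \v{C}ech class with torsion free coefficients to the full list of Bockstein equalities characterizing full-valuedness. Over a field the homology--cohomology duality renders (2) and (3) essentially formal, but with torsion free (non-field) $G$ one must control the rational reduction and every mod-$p$ reduction of $\gamma_U$ simultaneously, and guarantee that a class which is nontrivial and bounding over $G$ stays nontrivial after these reductions. I expect this to require the membrane structure of Corollary 2.7 (to keep the relevant classes top-dimensional and genuinely non-bounding below the membrane) together with the Bockstein inequalities, and to be exactly the step where the torsion freeness of $G$ is indispensable --- as the remark following the theorem already signals by exhibiting $\mathbb Q_1$ as a (torsion) group for which the conclusion fails.
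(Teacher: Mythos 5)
There are genuine gaps in all three parts, and the most serious one is exactly where you stopped. Part (1) is the substantive claim, and your proposal does not prove it: you correctly reduce to a nontrivial $\gamma_U\in H_{n-1}(F_U;G)$ bounding in $X$, but the passage from this single torsion-free-coefficient class to dimensional full-valuedness is left as an announced ``coefficient translation'' via rational and mod-$p$ reductions and the Bockstein inequalities, which you admit you do not know how to carry out. The paper avoids Bockstein theory entirely. Its route is: Proposition 4.5 (for torsion free $G$, the nerve-level Universal Coefficient Theorem gives $H_{n}(N;G)\cong H_{n}(N;\mathbb Z)\otimes G$, and an inverse-limit argument then shows $H_{n}(X,A;G)\neq 0\Rightarrow H_{n}(X,A;\mathbb Z)\neq 0$); Proposition 4.6 ($i^{n-1}_{F_U,X}(\gamma)=0$ with $\gamma\neq 0$ forces $H_n(X,F_U;G)\neq 0$); and Proposition 4.7 (Kodama's criterion: $\dim X=n$ and $H_n(X,A;\mathbb Z)\neq 0$ imply $\Hom(H^n(X,A;\mathbb Z),\mathbb Z)\neq 0$, so $\mathbb Z$ is a direct summand of $H^n(X,A;\mathbb Z)$ and $H^n(X,A;G)\neq 0$ for every $G$, i.e.\ $X$ is dimensionally full-valued). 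For (1.1) the same three propositions applied to $F_U$ alone (with $A=\varnothing$) give $\dim F_U=n-1$ and full-valuedness of $F_U$, and a product-dimension squeeze transfers this to $\bd\,\overline U$. This is the missing idea, and it is where torsion freeness enters --- not through controlling reductions of $\gamma_U$.

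Parts (2) and (3) also have concrete defects in the step that produces the nonvanishing relative class. In (2) you propose to extract the non-surjectivity of $j^{n-1}_{\overline U,\bd\,\overline U}$ from Kuz'minov's local characterization of $\dim_G$, but that theorem yields non-surjective restriction maps for relative boundaries of sets open in some closed subset $Y\subset X$, not for $\bd\,\overline U$ with $U$ in the prescribed base $\mathcal B$; without landing on a set $U\in\mathcal B$ you cannot invoke the hypothesis $H^{n}(\overline U;\mathbb Z)=0$. The paper instead uses Harlap's equality $h\dim_GX=\dim_GX$ for fields together with Sklyarenko's characterization of $h\dim_G$ by the local homology modules $\varinjlim_{x\in U}\widehat{H}_n(X,X\setminus U;G)$, which gives $\widehat{H}_n(\overline U,\bd\,\overline U;G)\neq 0$ for \emph{all} sufficiently small neighborhoods of a suitable point, hence for members of $\mathcal B$. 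In (3), the assertion that $lc^{n}$ ``furnishes a base of open $U$ with $H^n(\overline U;\mathbb Z)=0$'' is unjustified (local connectivity controls inclusion homomorphisms between open sets, not the cohomology of closures), and the integral nonvanishing $\widehat H_n(\overline U,\bd\,\overline U;\mathbb Z)\neq 0$ cannot be obtained ``as in (2)'' because $h\dim_{\mathbb Z}=\dim_{\mathbb Z}$ is not available. The paper's (3) instead kills $\widehat H_n(\overline U;G)$ for a field $G$ by combining the $lc^n$-triviality of $i^n_{\overline U,X}$ with $d_GX\leq\dim_GX=n$ (Proposition 2.1), runs the field argument with $G=\mathbb Q$, transfers $H_{n-1}(\bd\,\overline U;\mathbb Q)\neq 0$ to $H_{n-1}(\bd\,\overline U;\mathbb Z)\neq 0$ by Proposition 4.5, and concludes $d_{\mathbb Z}X=n$ from Proposition 4.9.
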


\begin{proof}
$(1.1)$ Suppose $d_{G}X=n$. Then $n\in\mathcal{H}_{X,G}$, so there exists $x\in X$ and a local base $\mathcal B_x$ at $x$ of open sets $U\subset X$ such that each
$\bd\,\overline U$ contains a closed set $F_U$ with $i^{n-1}_{F_U,X}(\gamma_U)=0$ for some non-trivial $\gamma_U\in H_{n-1}(F_U;G)$ (see Corollary 2.8). According to Proposition 4.5, $H_{n-1}(F_U;\mathbb Z)\neq 0$.
Because $\dim X=n$, we can suppose that $\dim\bd\,\overline U\leq n-1$ for all $U\in\mathcal B_x$. Hence, $\dim F_U\leq n-1$. On the other hand, $H_{n-1}(F_U;G)\neq 0$ implies $\dim F_U\geq n-1$. Thus, $\dim F_U= n-1$ and, by Proposition 4.7, $F_U$ is dimensionally full-valued. This yields that $bd\,\overline U$ is also dimensionally
full valued. Indeed, for any finite-dimensional metric compactum $Y$, we have the inequalities
$$\dim Y+n-1=\dim F_U\times Y\leq\dim bd\,\overline U\times Y\leq\dim Y+n-1.$$
Hence, $\dim bd\,\overline U\times Y=\dim bd\,\overline U+\dim Y$.

$(1.2)$ As above, we can find a closed set $F_U\subset X$ of dimension $n-1$ and a non-trivial $\gamma\in H_{n-1}(F_U;G)$ such that
$i_{F_U,X}^{n-1}(\gamma)=0$. Then, by Proposition 4.6, $H_n(X,F_U;G)\neq0$. Because $G$ is torsion free, the last relation implies
$H_n(X,F_U;\mathbb Z)\neq0$, see Proposition 4.5. Finally, according to Proposition 4.7, $X$ is dimensionally full-valued.

$(2)$ Let $G$ be any field. Because $X$ is dimensionally full-valued, $\dim_GX=\dim X=n$, see \cite[Theorem 11]{ku}.
Since $d_GX\leq\dim X=n$, it suffices to show that $n\in\mathcal H_{X,G}$. To this end, observe that, since $G$ is a field,
$h\dim_GX=\dim_GX=n$, see \cite{ha}. Here,
$h\dim_\mathbb QX$ denotes the maximal integer $m$ such that $\widehat{H}_{k}(X,P;G)=0$ for all $k>m$ and all closed sets $P\subset X$.
On the other hand, by \cite[Corollary 2]{sk1}, if $h\dim_GX$ is finite, it is equal to the maximum integer $k$ such that the module
$H^x_k=\varinjlim_{x\in U} \widehat{H}_k(X,X\setminus U;G)$ is non-trivial for some $x\in X$. Therefore, there exist a point $x$ with $\widehat{H}_n(X,X\setminus U;G)\neq 0$ for all sufficiently small neighborhoods $U$ of $x$. We fix such $U$ with $U\in\mathcal B$.
Then, by the excision axiom, $\widehat{H}_n(X,X\setminus U;G)$ is isomorphic to $\widehat{H}_n(\overline U,bd\,\overline U;G)$.
Consider the exact sequences (see Theorems 2-3 from \cite{sk})
{ $$
\begin{CD}
\widehat{H}_{n}(\overline U;G)@>{{}}>>\widehat{H}_{n}(\overline U,\bd\,\overline U;G)@>{{\partial}}>>\widehat{H}_{n-1}(\bd\,\overline U;G)@>{{\widehat{i}^{n-1}_{\bd\,\overline U,\overline U}}}>>\widehat{H}_{n-1}(\overline U;G)
\end{CD}
$$}\\
and
$$0\to\mathrm{Ext}(H^{n+1}(\overline U;\mathbb Z),G)\to\widehat{H}_{n}(\overline U;G)\to\mathrm{Hom}(H^{n}(\overline U;\mathbb Z),G)\to 0.$$
Since $H^{n+1}(\overline U;\mathbb Z)=0$ (recall that $\dim \overline U\leq n$) and $H^{n}(\overline U;\mathbb Z)=0$, the second sequence yields $\widehat{H}_{n}(\overline U;G)=0$.
Hence, the homomorphism $\partial:\widehat{H}_{n}(\overline U,bd\,\overline U;G)\to\widehat{H}_{n-1}(bd\,\overline U;G)$ is injective and
the image $L=\partial(\widehat{H}_{n}(\overline U,bd\,\overline U;G))$ is a non-trivial subgroup of $\widehat{H}_{n-1}(bd\,\overline U;G)$ such that $\widehat{i}^{n-1}_{bd\,\overline U,\overline U}(L)=0$, where $\widehat{i}^{n-1}_{bd\,\overline U,\overline U}:\widehat{H}_{n-1}(bd\,\overline U;G)\to \widehat{H}_{n-1}(\overline U;G)$ is the homomorphism generated by the inclusion $bd\,\overline U\hookrightarrow\overline U$.
Because $G$ is a field, the homomorphisms $T^{n-1}_{\overline U}:\widehat{H}_{n-1}(\overline U;G)\to H_{n-1}(\overline U;G)$ and
$T^{n-1}_{\bd\,\overline U}:\widehat{H}_{n-1}(\bd\,\overline U;G)\to H_{n-1}(\bd\,\overline U;G)$ are isomorphisms, see Proposition 4.4(i).
So, it follows from the commutative diagram
{ $$
\begin{CD}
\widehat{H}_{n-1}(\bd\,\overline U;G)@>{{\widehat{i}^{n-1}_{\bd\,\overline U,\overline U}}}>>\widehat{H}_{n-1}(\overline U;G)\\
@VV{T^{n-1}_{\bd\,\overline U}}V@VV{T^{n-1}_{\overline U}}V\\
H_{n-1}(\bd\,\overline U;G)@>{{i^{n-1}_{\bd\,\overline U,\overline U}}}>> H_{n-1}(\overline U;G)@.
\end{CD}
$$}\\
that $T^{n-1}_{bd\,\overline U}(L)$ contains a non-trivial element $\gamma$ with
$i^{n-1}_{bd\,\overline U,\overline U}(\gamma)=0$. The last equality implies $i^{n-1}_{bd\,\overline U,X}(\gamma)=0$. Thus, $n\in\mathcal H_{X,G}$.

$(3)$ We have $\dim_GX=n$ (recall that $X$ is dimensionally full-valued). Moreover, by \cite{ha}, $h\dim_GX=\dim_GX=n$. Hence, as in the proof of item (2), we can find a point $x\in X$ such that $\widehat{H}_n(\overline U,bd\,\overline U;G)\neq 0$ for all sufficiently small neighborhoods $U$ of $x$. Because $X$ is $lc^{n}$, $X$ is also locally homologically connected up to dimension $n$ with respect to \v{C}ech homology with arbitrary coefficients. Hence, there exists a neighborhood $V$ of $x$ such that the homomorphism $i^n_{V,X}: H_n(V;G)\to H_n(X;G)$ is trivial.

We claim that $\widehat{H}_n(\overline U;G)=0$ for each neighborhood $U$ of $x$ with $\overline U\subset V$. Indeed, suppose there is a neighborhood $U$ of $x$ with $\overline U\subset V$ and $\widehat{H}_n(\overline U;G)\neq 0$. Then Proposition 4.4(i) implies that $\widehat{H}_n(\overline U;G)$ is isomorphic to $H_n(\overline U;G)$. So, $H_n(\overline U;G)$ is a non-trivial group such that the inclusion homomorphism $i^n_{\overline U,X}=i^n_{V,X}\circ i^n_{\overline U,V}$ is trivial. That yields $d_GX\geq n+1$, which contradicts Proposition 2.1.

Therefore, the homomorphism $\partial:\widehat{H}_{n}(\overline U,bd\,\overline U;G)\to\widehat{H}_{n-1}(bd\,\overline U;G)$ is injective and
the image $L=\partial(\widehat{H}_{n}(\overline U,bd\,\overline U;G))$ is a non-trivial subgroup of $\widehat{H}_{n-1}(bd\,\overline U;G)$ with
$\widehat{i}^{n-1}_{bd\,\overline U,X}(L)=0$ (see the proof of item (3)). Because the groups $\widehat{H}_{n-1}(bd\,\overline U;G)$ and $\widehat{H}_{n-1}(X;G)$ are isomorphic to $H_{n-1}(bd\,\overline U;G)$ and
$H_{n-1}(X;G)$, respectively, we have  $d_GX=n$.

To prove that $d_\mathbb ZX=n$, consider the field $\mathbb Q$ of all rational numbers. According to the previous paragraph, there is a point $x\in X$ with
$H_{n-1}(bd\,\overline U;\mathbb Q)\neq 0$ for all sufficiently small neighborhoods $U$ of $x$. So, by Proposition 4.5, $H_{n-1}(bd\,\overline U;\mathbb Z)\neq 0$ for any such $U$. Finally, Proposition 4.9 (together with the equality $\dim X=n$) implies $d_\mathbb ZX=n$.
\end{proof}

\begin{cor}
Let $X$ be a metric $lc^{n}$-compactum, where $\dim X=n$.
Then the following conditions are equivalent: $X$ is dimensionally full-valued; there is a point $x\in X$
having a local base $\mathcal B_x$ such that $bd\,\overline U$ is dimensionally full-valued for each $U\in\mathcal B_x$;
$d_GX=n$ for any torsion free field $G$; $d_\mathbb ZX=n$.
%\item[(2)] If $\dim_\mathbb QX=n$, then $d_\mathbb QX=n$.
%\end{itemize}
\end{cor}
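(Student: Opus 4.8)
Write (a), (b), (c), (d) for the four conditions of the Corollary, in the order listed: (a) $X$ is dimensionally full-valued; (b) some point $x$ has a local base $\mathcal B_x$ such that $\bd\,\overline U$ is dimensionally full-valued for each $U\in\mathcal B_x$; (c) $d_GX=n$ for every torsion free field $G$; (d) $d_{\mathbb Z}X=n$. The plan is to deduce everything from Theorem \ref{main2}, treating (a), (c), (d) as a rigid core that falls out immediately and (b) as the delicate condition. First I would record the cheap equivalences. Both $\mathbb Z$ and $\mathbb Q$ are torsion free abelian groups, so if either $d_{\mathbb Z}X=n$ or $d_{\mathbb Q}X=n$ holds, Theorem \ref{main2}(1.2) gives that $X$ is dimensionally full-valued; this yields (d)$\Rightarrow$(a) and (c)$\Rightarrow$(a) (apply (c) with $G=\mathbb Q$). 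Conversely, since $X$ is $lc^{n}$, Theorem \ref{main2}(3) turns dimensional full-valuedness into $d_{\mathbb Z}X=d_GX=n$ for every field $G$, hence for every torsion free field, giving (a)$\Rightarrow$(d) and (a)$\Rightarrow$(c). Thus (a), (c), (d) are already equivalent.

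Next I would obtain (b) from this core. Granting (d) (equivalently (a) or (c)), the group $\mathbb Z$ is torsion free with $d_{\mathbb Z}X=n$, so Theorem \ref{main2}(1.1) produces a point $x$ and a local base $\mathcal B_x$ whose members $U$ have $\bd\,\overline U$ dimensionally full-valued (in fact of dimension $n-1$). This is exactly (b), so each of (a), (c), (d) implies (b), and it remains only to close the cycle with (b)$\Rightarrow$(a).

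For the converse I would argue as follows. Given the point $x$ and the base $\mathcal B_x$ of (b), select a member $U$ for which $\bd\,\overline U$ is dimensionally full-valued and has dimension exactly $n-1$; for such $U$ one has $\dim_{\mathbb Q}\bd\,\overline U=n-1$. I would then convert this into a nontrivial $(n-1)$-class dying in $X$ exactly as in the proof of Theorem \ref{main2}(3): since $X\in lc^{n}$, one has $\widehat H_n(\overline U;\mathbb Q)=0$ for small $U$, so the boundary map $\partial\colon\widehat H_n(\overline U,\bd\,\overline U;\mathbb Q)\to\widehat H_{n-1}(\bd\,\overline U;\mathbb Q)$ of the exact sequence of $(\overline U,\bd\,\overline U)$ is injective with image killed by $\widehat i^{\,n-1}_{\bd\,\overline U,X}$. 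Transporting along the natural transformation $T$ to \v Cech homology, an isomorphism over the field $\mathbb Q$ by Proposition 4.4(i), yields $n\in\mathcal H_{X,\mathbb Q}$, i.e. $d_{\mathbb Q}X=n$, whence Theorem \ref{main2}(1.2) gives (a); the integral statement $d_{\mathbb Z}X=n$ then follows from Proposition 4.5 and Proposition 4.9 together with $\dim X=n$, as in the last paragraph of the proof of Theorem \ref{main2}(3).

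The main obstacle is precisely this implication (b)$\Rightarrow$(a). Its difficulty is that dimensional full-valuedness of $\bd\,\overline U$ is a relative/cohomological statement ($\dim_{\mathbb Q}\bd\,\overline U=n-1$), whereas membership in $\mathcal H_{X,\mathbb Q}$ requires an absolute $(n-1)$-class on a subset of $X$ that bounds in $X$: a dimensionally full-valued $(n-1)$-compactum can be acyclic (e.g. a disk), so the nonvanishing class must be manufactured from the pair $(\overline U,\bd\,\overline U)$ and cannot be read off $\bd\,\overline U$ directly. The entire argument therefore rests on the $lc^{n}$ hypothesis, which forces $\widehat H_n(\overline U;\mathbb Q)=0$ and thereby makes $\partial$ injective, and on the auxiliary point that a boundary of full dimension $n-1$ is actually available at the chosen $x$ — establishing the latter (that the point $x$ supplied by (b) can be taken of maximal local dimension) is the other delicate ingredient I would need to pin down.
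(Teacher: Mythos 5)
Your handling of the implications among (a), (c), (d), and of (a)/(c)/(d)$\Rightarrow$(b), is correct and is exactly the intended reading: the paper states the Corollary as an immediate consequence of Theorem~\ref{main2} and gives no separate proof, and items (1.1), (1.2) and (3) of that theorem do yield those implications verbatim (note that (1.1) even gives $\bd\,\overline U$ of dimension $n-1$, which is stronger than condition (b) as stated).

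The genuine gap is in your implication (b)$\Rightarrow$(a), and it is not the one you flagged as ``the other delicate ingredient.'' Your argument shows that $\widehat{H}_n(\overline U;\mathbb Q)=0$ for small $U$ and hence that $\partial\colon\widehat{H}_n(\overline U,\bd\,\overline U;\mathbb Q)\to\widehat{H}_{n-1}(\bd\,\overline U;\mathbb Q)$ is injective with image killed in $X$ --- but this is vacuous unless $\widehat{H}_n(\overline U,\bd\,\overline U;\mathbb Q)\neq 0$, and you never establish that nontriviality. In the proof of Theorem~\ref{main2}(2)--(3) this nontriviality is \emph{the} input: it comes from $h\dim_{\mathbb Q}X=\dim_{\mathbb Q}X=n$ via Sklyarenko's local characterization of $h\dim$, and $\dim_{\mathbb Q}X=n$ is itself a consequence of $X$ being dimensionally full-valued, i.e.\ of (a). So importing that step here is circular. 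Knowing only that $\bd\,\overline U$ is dimensionally full-valued (even of dimension exactly $n-1$) gives $\dim_{\mathbb Q}\bd\,\overline U=n-1$, hence a closed $B\subset\bd\,\overline U$ with $H^{n-1}(\bd\,\overline U,B;\mathbb Q)\neq 0$ and $\widehat{H}_{n-1}(\bd\,\overline U,B;\mathbb Q)\neq 0$; but the exact sequence of the pair $(\bd\,\overline U,B)$ then only produces either a nonzero class in $H_{n-1}(\bd\,\overline U;\mathbb Q)$ (which would suffice, via Proposition 4.9) or a nonzero class in $H_{n-2}(B;\mathbb Q)$ dying in $\bd\,\overline U$ (which only yields $n-1\in\mathcal H_{X,\mathbb Q}$, one degree short). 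A disk with its boundary circle shows the second alternative really occurs, so dimensional full-valuedness of $\bd\,\overline U$ cannot be converted into $n\in\mathcal H_{X,\mathbb Q}$ by this route. In addition, the issue you did flag is real and unresolved: condition (b) as literally stated does not supply any $U$ with $\dim\bd\,\overline U=n-1$ (a local base at a point where $X$ is locally low-dimensional has low-dimensional, hence automatically full-valued, boundaries). Since the paper offers no argument for (b)$\Rightarrow$(a) to compare against, you would need to either strengthen (b) (to include $\dim\bd\,\overline U=n-1$, as in Theorem~\ref{main2}(1.1)) and find an independent source of a nontrivial class in $\widehat H_n(\overline U,\bd\,\overline U;\mathbb Q)$ or in $H_{n-1}(\bd\,\overline U;\mathbb Q)$, or find a different route entirely; as written, this direction is not proved.
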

Let us also mention the following corollary, which improves a result of  Kodama \cite[Theorem 8]{ko1}.
\begin{cor}
Every two-dimensional $lc^{2}$-compactum is dimensionally full-valued.
\end{cor}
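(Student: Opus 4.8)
The plan is to read this off from Corollary 2.10. Since $X$ is $lc^2$ and $\dim X=2$, that corollary applies with $n=2$ and asserts, among other equivalences, that $X$ is dimensionally full-valued precisely when there is a point $x\in X$ with a local base $\mathcal B_x$ at $x$ such that $\bd\,\overline U$ is dimensionally full-valued for every $U\in\mathcal B_x$. So the entire task reduces to exhibiting a single point $x$ and a local base at $x$ whose members all have dimensionally full-valued boundaries, and then quoting Corollary 2.10 in the direction ``boundary condition $\Rightarrow$ full-valued.''

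First I would fix an arbitrary point $x\in X$. Because $X$ is a metric compactum with $\dim X=2$, the coincidence $\dim X=\Ind X$ for separable metric spaces provides a local base $\mathcal B_x$ at $x$ consisting of open sets $U$ with $\dim\bd\,\overline U\le 1$; this is just the inductive-dimension description of a space of covering dimension $2$. Thus every member of the base has an at most one-dimensional boundary. Next I would invoke the classical fact that every metric compactum of dimension $\le 1$ is dimensionally full-valued (equivalently, $\dim(Z\times Y)=\dim Z+\dim Y$ whenever $\dim Z\le 1$). Applying this to each $\bd\,\overline U$ shows that all the boundaries in $\mathcal B_x$ are dimensionally full-valued, which is exactly the hypothesis required to conclude via Corollary 2.10 that $X$ is dimensionally full-valued.

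The only genuinely external ingredient is the statement that $\le 1$-dimensional compacta are dimensionally full-valued, where the real content sits; I would cite it rather than reprove it, since it is classical. Everything else is bookkeeping: the $lc^2$ hypothesis enters only to make the equivalence in Corollary 2.10 available (without it the conclusion genuinely fails, as the two-dimensional Pontryagin examples show), while the hypothesis $\dim X=2$ enters only to push the boundaries $\bd\,\overline U$ down to dimension $\le 1$. I do not expect a serious obstacle beyond two points of care: matching the base-with-small-boundaries supplied by dimension theory against the $\bd\,\overline U$ appearing in Corollary 2.10, and noting that a single witnessing point $x$ suffices—which it plainly does, since the construction above works verbatim at every point of $X$.
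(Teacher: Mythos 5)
Your argument reduces everything to the implication ``there is a point with a local base whose boundaries are dimensionally full-valued $\Rightarrow X$ is dimensionally full-valued'' from Corollary 2.10, and that is exactly where it breaks down. Corollary 2.10 is stated without proof as a consequence of Theorem \ref{main2}, but Theorem \ref{main2} only supplies the \emph{other} directions: items (1.1)--(1.2) show that $d_GX=n$ (for $G$ torsion free) implies both full-valuedness and the boundary condition, and item (3) shows that full-valuedness plus $lc^n$ implies $d_GX=n$. Nothing in Theorem \ref{main2} (nor elsewhere before Corollary 2.11) establishes the direction you invoke. Worse, your own observation that for $n=2$ the boundary condition is \emph{automatically} satisfied by every two-dimensional compactum (boundaries can be pushed down to dimension $\le 1$, and such compacta are classically full-valued) shows that, for $n=2$, the implication you are quoting is literally equivalent to the statement of Corollary 2.11. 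So the proposal is circular: the one arrow of Corollary 2.10 you use is precisely the content that has to be proved. (Note also that one cannot repair that arrow by routing through Proposition 4.9: a dimensionally full-valued $(n-1)$-dimensional boundary need not contain any closed set with nontrivial $H_{n-1}(\cdot;\mathbb Z)$ --- a dendrite is a $1$-dimensional, dimensionally full-valued compactum all of whose closed subsets have trivial first \v{C}ech homology.)

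The paper's actual proof is a substantive homological argument, not bookkeeping. It starts from $d_{\mathbb S^1}X=h\dim_{\mathbb S^1}X=2$ to produce a point $x$ with small neighborhoods $U$ for which $\widehat{H}_2(\overline U,\bd\,\overline U;\mathbb S^1)$ and $\widehat{H}_1(\bd\,\overline U;\mathbb S^1)$ are nontrivial and the relevant inclusion homomorphisms vanish; it then passes by the Universal Coefficient Theorem to $H^1(\bd\,\overline U;\mathbb Z)\neq 0$ with $j^1_{X,\bd\,\overline U}$ trivial, and uses the fact that one-dimensional \v{C}ech cohomology with integer coefficients is torsion free to conclude that $H^2(X,\bd\,\overline U;\mathbb Z)$ has elements of infinite order, whence $\dim_{\mathbb Q}X=2$, $d_{\mathbb Q}X=2$, and finally Theorem \ref{main2}(1) applies with the torsion free field $\mathbb Q$. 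The torsion-freeness of $H^1$ is the homological counterpart of the classical fact you cite (that $\le 1$-dimensional compacta are full-valued), but it has to enter through this cohomological route; it cannot be fed into Corollary 2.10 as a black box. A secondary point: even the statement of the boundary condition is delicate, since Theorem \ref{main2}(1.1) produces boundaries that are full-valued \emph{of dimension $n-1$}, whereas your base only guarantees $\dim\bd\,\overline U\le 1$; but this is minor compared with the missing implication.
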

\begin{proof}
The complete proof is presented in \cite[Theorem 3.2]{v}, here we provide a sketch only. By \cite{a}, $d_{\mathbb S^1}X=h\dim_{\mathbb S^1}X=2$.
Then, as in the proof of Theorem 2.9(3), there is $x\in X$ such that $\widehat{H}_2(\overline U,\bd\,\overline U;\mathbb S^1)$ and $\widehat{H}_{1}(\bd\,\overline U;\mathbb S^1)$ are both non-trivial for sufficiently small neighborhoods $U$ of $x$, while the homomorphisms
%$\widehat{i}^{1}_{\overline U,X}:\widehat{H}_{1}(\overline U;\mathbb S^1)\to\widehat{H}_{1}(X;\mathbb S^1)$ and
$\widehat{i}^{1}_{\bd\,\overline U,X}:\widehat{H}_{1}(\bd\,\overline U;\mathbb S^1)\to\widehat{H}_{1}(X;\mathbb S^1)$
are trivial. Moreover, $\widehat{i}^{2}_{\overline U,X}:\widehat{H}_{2}(\overline U;\mathbb S^1)\to\widehat{H}_{2}(X;\mathbb S^1)$ is also trivial for small $U$ because $X$ is $lc^{2}$ and the exact homology groups with coefficients $\mathbb S^1$ are isomorphic to the corresponding \v{C}ech groups (see Proposition 4.4(i)).
 So, by the Universal Coefficient Theorem for \v{C}ech cohomology, $H^2(\overline U,\bd\,\overline U;\mathbb Z)\neq 0$, $H^1(\bd\,\overline U;\mathbb Z)\neq 0$ and the triviality of the homomorphism $\widehat{i}^{1}_{\bd\,\overline U,X}$
%$\widehat{i}^{2}_{\overline U,X}$ and $\widehat{i}^{1}_{\overline U,X}$
yields the triviality of the inclusion homomorphism
%$j^2_{X\overline U}:H^2(X;\mathbb Z)\to H^2(\overline U;\mathbb Z)$ and
$j^1_{X\bd\,\overline U}:H^1(X;\mathbb Z)\to H^1(\bd\,\overline U;\mathbb Z)$. %Since $\dim X=2$, there exists a homomorphism from $H^{2}(X,\bd\,\overline U;\mathbb Z)$ onto $H^{2}(\overline U,\bd\,\overline U;\mathbb Z)$. So, $H^{2}(X,\bd\,\overline U;\mathbb Z)\neq 0$ for all small $U$.
It follows from the exact sequence
{ $$
\begin{CD}
\cdots\to H^{1}(X;\mathbb Z)@>{{j^1_{X\bd\,\overline U}}}>>H^1(\bd\,\overline U;\mathbb Z)@>{{\partial_X}}>>H^2(X,\bd\,\overline U;\mathbb Z)\to\cdots
\end{CD}
$$}\\
that $\partial_X$ is an injective homomorphism. Hence, $H^2(X,\bd\,\overline U;\mathbb Z)$ contains elements of infinite order (recall that
the simplicial one-dimensional cohomology groups with integer coefficients are free, so any non-trivial one dimensional \v{C}ech cohomology group $H^1(.:\mathbb Z)$ is torsion free, see for example \cite[Theorem 12.5]{dr}).
This implies that $H^2(X,\bd\,\overline U;\mathbb Q)\neq 0$.
 So, $\dim_{\mathbb Q}X=2$, and by \cite[p.364]{ha}, $h\dim_{\mathbb Q}X=2$. On the other hand, $d_{\mathbb Q}X=h\dim_{\mathbb Q}X=2$, see \cite[Corollary 2.3]{v}.
Finally, Theorem 2.9(1) yields $X$ is dimensionally full-valued.
\end{proof}
%\begin{proof}
%The first item follows directly from Theorem 2.9. The second item requires more considerations. Since $\dim_\mathbb QX=n$, by
%\cite[Theorem 2(3)]{ku}, there exists a closed set $K\subset X$ and a point $x\in K$ such that $H^{n-1}(bd_K\,\overline W;\mathbb Q)\neq 0$ for all sufficiently small neighborhoods $W$ of $x$ in $K$. By the universal coefficients formula for \v{C}ech cohomology, $H^{n-1}(bd_K\,\overline W;\mathbb Q)$ is isomorphic to $H^{n-1}(bd_K\,\overline W;\mathbb Z)\otimes\mathbb Q$, so $H^{n-1}(bd_K\,\overline W;\mathbb Z)$ contains elements of infinite order. Consequently, $\mathrm{Hom}(H^{n-1}(F_W;\mathbb Z),\mathbb Q)\neq 0$ (see \cite[\S 43, Example 6]{fu}), where $F_W=bd_K\,\overline W$.
%Therefore, it follows from the exact sequence
%$$0\to\Ext(H^{n}(F_W;\mathbb Z),\mathbb Q)\to\widehat{H}_{n-1}(F_W;\mathbb Q)\to \Hom(H^{n-1}(F_W;\mathbb Z),\mathbb Q)\to 0$$
%that $\widehat{H}_{n-1}(F_W;\mathbb Q)\neq 0$.
%$\mathrm{Hom}(H^{n-1}(F_V;\mathbb Z),\mathbb Q)\neq 0$ using only the equality $\dim_\mathbb QX=\dim X=n$. Then the exact coefficient formulas involving $F_V$ (see the proof of Theorem 2.9(3)) implies $\widehat{H}_{n-1}(F;\mathbb Q)\neq 0$. Hence, according to Proposition 2.8, $n\in\mathcal H_{X,\mathbb Q}$ (recall that
%$\widehat{H}_{n-1}(F;\mathbb Q)$ is isomorphic to $H_{n-1}(F;\mathbb Q)$).
 % Finally, by Proposition 2.1, $d_\mathbb QX=n$.
%\end{proof}

%\begin{proof}
%Suppose $n\not\in\mathcal{H}_{X,G}$.
%\end{proof}

\section{Homogeneous metric $ANR$-compacta}
The following statement was established in \cite{vv0} (see Theorem 1.1 and Corollary 1.2):
\begin{pro}
Let $X$ be a finite dimensional homogeneous $ANR$-space with $\dim X\geq 2$. Then $X$ has the following properties for any abelian group $G$ and $n\geq 2$ with $n\in\mathcal{H}_{X,G}$ and $n+1\not\in\mathcal{H}_{X,G}$:
\begin{itemize}
\item[(1)] Every $x\in X$ has a basis of open sets $U_k$ such that $H_{n-1}(\overline U_k;G)=0$ and  $H_{n-1}(\bd\,\overline U_k;G)\neq 0$;
\item[(2)] If a closed subset $K\subset X$ is an $(n-1)$-homology membrane spanned on $B$ for some closed set $B\subset X$ and $\gamma\in H_{n-1}(B;G)$, then $(K\setminus B)\cap\overline{X\setminus K}=\varnothing$.
\end{itemize}
\end{pro}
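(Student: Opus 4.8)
The plan is to translate the two hypotheses into statements about inclusion homomorphisms, reduce everything to a local picture at a single point of a suitable homology membrane, and then spread that picture over $X$ by homogeneity. Concretely, $n\in\mathcal H_{X,G}$ produces (via Corollary 2.7) a compact pair $B\subset K\subset X$ with $K$ an $(n-1)$-homology membrane for a non-trivial $\gamma\in H_{n-1}(B;G)$, while $n+1\notin\mathcal H_{X,G}$ says exactly that $i^{n}_{F,X}\colon H_n(F;G)\to H_n(X;G)$ is injective for every closed $F\subset X$. Since an $ANR$ is $lc^{k}$ for all $k$, the remarks preceding Theorem \ref{main2} are available: at each point, sufficiently small closed neighbourhoods induce the trivial map into $H_*(X;G)$ through dimension $n$. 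Homogeneity enters only at the end: once the desired structure is exhibited at one point (resp. for one membrane), a self-homeomorphism of $X$ transports it, turning ``some point'' into ``every point''.

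I would prove item (2) first, because it is what makes item (1) transparent. Assume, for contradiction, a point $z\in(K\setminus B)\cap\overline{X\setminus K}$. By Property~6 of \cite{bb}, $z$ has arbitrarily small open $U$ for which $\overline{U\cap K}$ is again an $(n-1)$-homology membrane, now spanned on $F_U=\bd_K(U\cap K)$ for a non-trivial $\gamma_U\in H_{n-1}(F_U;G)$. The point $z\in\overline{X\setminus K}$ forces $\overline{U\cap K}\subsetneq\overline U$, so this membrane meets $\bd\,\overline U$ ``one-sidedly''. On the other hand, by homogeneity the local homology $\varinjlim_{U\ni z}\widehat H_*(X,X\setminus U;G)$ at $z$ agrees with that at an \emph{interior} point of a membrane, where the structure is two-sided. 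I expect this clash to be the main obstacle: carrying it out should yield, near $z$, a non-trivial class in $H_n(F;G)$ (for a suitable closed $F$) that dies in $X$, contradicting the injectivity of $i^{n}_{F,X}$ guaranteed by $n+1\notin\mathcal H_{X,G}$. Thus $(K\setminus B)\cap\overline{X\setminus K}=\varnothing$, i.e. $K\setminus B\subset\Int K$.

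With item (2) in hand, item (1) becomes a local computation at an interior point. Pick $x_0\in K\setminus B\subset\Int K$; by item (2) the sets furnished by Property~6 of \cite{bb} satisfy $\overline{U\cap K}=\overline U$ and $F_U=\bd\,\overline U$, so $\overline U$ is itself a membrane spanned on $\bd\,\overline U$ for a non-trivial $\gamma_U\in H_{n-1}(\bd\,\overline U;G)$ with $i^{n-1}_{\bd\,\overline U,\overline U}(\gamma_U)=0$. In particular $H_{n-1}(\bd\,\overline U;G)\neq0$ immediately. For the vanishing $H_{n-1}(\overline U;G)=0$ I would first note, exactly as in the proof of Theorem \ref{main2}(3), that $H_n(\overline U;G)=0$: a non-trivial class there would map trivially into $H_n(X;G)$ through a small neighbourhood, giving $n+1\in\mathcal H_{X,G}$, against hypothesis. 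Then, in the exact sequence of the pair
\[
\widehat H_n(\overline U;G)\to\widehat H_n(\overline U,\bd\,\overline U;G)\xrightarrow{\ \partial\ }\widehat H_{n-1}(\bd\,\overline U;G)\xrightarrow{\ \widehat i\ }\widehat H_{n-1}(\overline U;G),
\]
one passes to \v{C}ech homology by Propositions 4.4 and 4.5; the membrane minimality of $\overline U$ is what I would use to rule out spare $(n-1)$-cycles and conclude $H_{n-1}(\overline U;G)=0$. Finally, transporting the whole local base by self-homeomorphisms of $X$ yields the asserted base at every point, completing item (1).

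The two steps I expect to require genuine care are the separation statement (2) --- specifically, extracting a contradiction from the one-sided versus two-sided local homology at a boundary point of the membrane --- and the vanishing $H_{n-1}(\overline U;G)=0$, where the naive $lc$-argument one dimension below merely reproduces the standing hypothesis $n\in\mathcal H_{X,G}$ and must be replaced by an argument using the minimality of the membrane together with homogeneity.
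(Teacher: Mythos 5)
The first thing to note is that the paper does not prove this proposition at all: it is quoted from \cite{vv0} (Theorem 1.1 and Corollary 1.2 there), so there is no in-text argument to measure your attempt against. Judged on its own terms, your proposal is an outline with two genuine gaps, both of which you locate but neither of which you fill. For item (2), the entire content is the step you describe as ``I expect this clash \ldots should yield a non-trivial class in $H_n(F;G)$ that dies in $X$''; nothing in the proposal actually produces such a class. The informal contrast between ``one-sided'' and ``two-sided'' local homology cannot be made rigorous using homogeneity only in the weak sense of transporting structures by individual self-homeomorphisms: the point $z$ and an interior point of the membrane need not be related by any homeomorphism you control relative to $K$. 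What is missing is the uniform form of homogeneity --- Effros' micro-transitivity theorem (or the Bing--Borsuk argument it formalizes): given $z\in(K\setminus B)\cap\overline{X\setminus K}$, one needs an $\varepsilon$-homeomorphism $h$ of $X$, with $\varepsilon$ small relative to the membrane data, carrying $z$ to a nearby point of $X\setminus K$; comparing $K$ with $h(K)$ --- two membranes for essentially the same class, one of which sticks out of the other --- is what violates minimality of the membrane, respectively produces the forbidden $n$-cycle contradicting the injectivity of $i^{n}_{F,X}$. This tool appears nowhere in your proposal.

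For item (1), the nonvanishing $H_{n-1}(\bd\,\overline U;G)\neq 0$ is indeed close to immediate once (2) is known (modulo the small point that Property 6 of \cite{bb} gives a class on $F_U=\bd_K(U\cap K)$, which is only a subset of $\bd\,\overline U$, and a nontrivial class on a subset need not remain nontrivial in the larger set). But the vanishing $H_{n-1}(\overline U;G)=0$ is not obtained: as you correctly observe, the $lc$-argument one dimension down merely reproduces the hypothesis $n\in\mathcal H_{X,G}$, and the substitute you propose --- ``membrane minimality rules out spare $(n-1)$-cycles'' --- does not work as stated. Minimality of $\overline U$ as a membrane for $\gamma_U$ constrains only the sets in which $\gamma_U$ bounds; it says nothing about $(n-1)$-cycles of $\overline U$ unrelated to $\gamma_U$. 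Killing those again requires the membrane machinery together with micro-transitivity: a nontrivial $\alpha\in H_{n-1}(\overline U;G)$ bounds in $X$ by local contractibility, hence spans its own membrane $K_\alpha\supset\overline U$, and one must then play $K_\alpha$ against the given structure. In summary, your skeleton --- membranes via Corollary 2.7, Property 6 of \cite{bb}, reading $n+1\notin\mathcal H_{X,G}$ as injectivity of the inclusion homomorphisms in dimension $n$, and spreading a basis over $X$ by homogeneity --- is the right one, but the two steps carrying the mathematical weight are missing, and the tool needed to complete them is absent from the proposal.
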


Here is our first proposition concerning homogeneous $ANR$.
\begin{pro}
Let  $X$ be as in Proposition $3.1$ and $Z\subset X$ a closed set. If $n\geq 2$ is an integer with $n\in\mathcal{H}_{Z,G}$ and $n+1\not\in\mathcal{H}_{X,G}$, then
$Z$ has a non-empty interior in $X$. Moreover $d_GZ=d_GX$ iff $\Int(Z)\neq\varnothing$.
\end{pro}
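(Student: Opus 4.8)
The plan is to treat the three implications separately, using the homology-membrane machinery of Section~2 together with Proposition~3.1, and to observe that the ``only if'' direction of the last claim reduces formally to the first part.

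\emph{The interior is non-empty.} Since $n\in\mathcal H_{Z,G}$, applying Corollary~2.7 to the compactum $Z$ produces a closed pair $F\subset K\subset Z$ such that $K$ is an $(n-1)$-homology membrane spanned on $F$ for some non-trivial $\gamma\in H_{n-1}(F;G)$. First I would record that $n\in\mathcal H_{X,G}$ as well: the class $\gamma$ is homologous to zero in $Z$, hence in $X$ via $i^{n-1}_{F,X}=i^{n-1}_{Z,X}\circ i^{n-1}_{F,Z}$, and $F$ is closed in $X$. Thus Proposition~3.1 applies to $X$ with this $n$; as $K$ is an $(n-1)$-homology membrane in $X$ spanned on $F$, part~(2) gives $(K\setminus F)\cap\overline{X\setminus K}=\varnothing$, equivalently $K\setminus F\subset\Int(K)$. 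Because $\gamma\neq 0$ in $H_{n-1}(F;G)$ but dies in $K$, we have $K\neq F$, so $K\setminus F\neq\varnothing$ and therefore $\Int(K)\neq\varnothing$. As $K\subset Z$, this yields $\Int(Z)\supset\Int(K)\neq\varnothing$.

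\emph{If $\Int(Z)\neq\varnothing$ then $d_GZ=d_GX$.} The inequality $d_GZ\le d_GX$ is automatic, since a class killed in $Z$ is killed in $X$, so $\mathcal H_{Z,G}\subset\mathcal H_{X,G}$. For the reverse inequality put $m=d_GX=\max\mathcal H_{X,G}$, finite because $X$ is finite-dimensional. Then $m\in\mathcal H_{X,G}$, and Corollary~2.8 supplies a point $x_0\in X$ with a local base of open sets $U$ such that each $\bd\,\overline U$ contains a closed $F_U$ carrying a non-trivial $\gamma_U\in H_{m-1}(F_U;G)$ with $i^{m-1}_{F_U,\overline U}(\gamma_U)=0$. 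Here homogeneity enters: choosing $z\in\Int(Z)$ and a homeomorphism $h\colon X\to X$ with $h(x_0)=z$, I would push one sufficiently small such $U$ into $Z$, so that $h(\overline U)\subset Z$. Functoriality of homology under $h$ then yields a non-trivial $h_*\gamma_U\in H_{m-1}(h(F_U);G)$ killed in $h(\overline U)\subset Z$, whence $m\in\mathcal H_{Z,G}$ and $d_GZ\ge m=d_GX$.

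\emph{If $d_GZ=d_GX$ then $\Int(Z)\neq\varnothing$.} This is the cleanest step: set $m=d_GZ=d_GX$. Then $m\ge n\ge 2$, $m=\max\mathcal H_{Z,G}\in\mathcal H_{Z,G}$, and $m+1\notin\mathcal H_{X,G}$ because $m=\max\mathcal H_{X,G}$. These are exactly the hypotheses of the present proposition with $n$ replaced by $m$, so the first part already gives $\Int(Z)\neq\varnothing$. The main obstacle I anticipate is the first part: one must check that the abstract membrane produced by Corollary~2.7 genuinely meets the hypotheses of Proposition~3.1 inside $X$ (in particular that $n\in\mathcal H_{X,G}$, not merely $n\in\mathcal H_{Z,G}$), and then read the set-theoretic conclusion $(K\setminus F)\cap\overline{X\setminus K}=\varnothing$ correctly as the assertion that $K\setminus F$ lies in the $X$-interior of $K$. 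The transport via homogeneity is routine once the membrane data of Corollary~2.8 is in hand, and the remaining implication is a formal reduction to the first part.
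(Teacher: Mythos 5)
Your argument is correct and follows the paper's proof in all essentials: non-emptiness of the interior via Corollary~2.7 together with Proposition~3.1(2) applied to the membrane $K$ (after noting $n\in\mathcal H_{X,G}$), and the ``only if'' direction by re-running the first part with $m=d_GZ=d_GX$. The only (harmless) deviation is in the ``if'' direction, where you produce the small boundary class inside $\Int(Z)$ from Corollary~2.8 plus an explicit homogeneity transport, whereas the paper simply invokes Proposition~3.1(1) at a point of $\Int(Z)$; both routes rest on the same machinery and give $d_GZ\geq d_GX$, with $d_GZ\leq d_GX$ automatic.
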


\begin{proof}
Since $n\in\mathcal H_{Z,G}$, there exists a closed set $F\subset Z$ and non-trivial $\gamma\in H_{n-1}(F,G)$ homologous to zero in $Z$. Consequently, by Corollary 2.7, we can find a closed set $K\subset Z$ containing $F$ such that $K$ is a homological membrane for $\gamma$ spanned on $F$. Because $\gamma$, being homologous to zero in $Z$, is homologous to zero in $X$, $n\in\mathcal H_{X,G}$. Recall that $n+1\not\in\mathcal H_{X,G}$, so we can apply Proposition 3.1(2) to conclude that
$(K\setminus F)\cap\overline{X\setminus K}=\varnothing$. This implies that $K\setminus F$ is open in $X$, i.e., $\Int(Z)\neq\varnothing$.

Suppose now that $d_GZ=d_GX=n$. Then $n\in\mathcal H_{Z,G}$ and $n+1\not\in\mathcal{H}_{X,G}$. Hence, $\Int(Z)\neq\varnothing$. Conversely, if $\Int(Z)\neq\varnothing$ and
$d_GX=n$, choose $x\in\Int(Z)$. Then, by Proposition 3.1(1), there exists $U\in\mathcal B_x$ such that $\overline U\subset\Int(Z)$ and $H_{n-1}(\bd\,\overline U;G)$
contains a non-trivial element homologous to zero in $\overline U$. Hence, $d_GZ\geq n$. Finally, the inequality $d_GZ\leq d_GX$ implies $d_GZ=d_GX$. $\Box$
\end{proof}

\begin{thm}\label{main1}
Let $X$ be a homogeneous $ANR$-space.
\begin{itemize}
\item[(1)] If $G$ is a field and $\dim_GX=n$, then $n\in\mathcal H_{X,G}$ and $n+1\not\in\mathcal H_{X,G}$;
\item[(2)] If $\dim X=n$, then $\mathcal H_{X,\mathbb Z}\cap\{n-1,n\}\neq\varnothing$.
\end{itemize}
\end{thm}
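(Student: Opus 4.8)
The plan is to prove the two parts by quite different means: part~(1) is, coefficientwise, the argument of Theorem~\ref{main2}(3) stripped of the full-valuedness hypothesis, while part~(2) is a descent from field coefficients to $\mathbb Z$ via the Bockstein homomorphism. For part~(1), the inclusion $n+1\not\in\mathcal H_{X,G}$ is immediate: $d_GX=\max\mathcal H_{X,G}$ and Proposition~2.1 gives $d_GX\le\dim_GX=n$ for the field $G$. For $n\in\mathcal H_{X,G}$ I would use that a homogeneous $ANR$ is in particular an $ANR$, hence $lc^k$ for every $k$, so the only hypotheses actually consumed by the proof of Theorem~\ref{main2}(3) --- namely $X\in lc^{n}$ together with $\dim_GX=n$ for a field $G$ --- are at hand (full-valuedness entered there only to produce $\dim_GX=\dim X$, which here is given). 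Concretely: by \cite{ha} $h\dim_GX=\dim_GX=n$, so by \cite[Corollary 2]{sk1} there is a point $x$ with $\widehat H_n(\overline U,\bd\,\overline U;G)\cong\widehat H_n(X,X\setminus U;G)\neq0$ for all small $U$; the $lc^{n}$ property and Proposition~2.1 force $\widehat H_n(\overline U;G)=0$ (otherwise $T^n_{\overline U}$ and local connectivity yield a nontrivial element of $H_n(\overline U;G)$ dying in $X$, i.e.\ $d_GX\ge n+1$); then $\partial$ is injective and, transported through the isomorphisms $T^{n-1}$ (valid for a field, Proposition~4.4(i)), it produces a nontrivial $\gamma\in H_{n-1}(\bd\,\overline U;G)$ with $i^{n-1}_{\bd\,\overline U,X}(\gamma)=0$; hence $n\in\mathcal H_{X,G}$. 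Homogeneity is not needed for part~(1).

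For part~(2), I first observe (assuming $n\ge 2$, the case $n\le 1$ being degenerate) that the claim is equivalent to $d_{\mathbb Z}X\ge n-1$: since $d_{\mathbb Z}X=\max\mathcal H_{X,\mathbb Z}\le\dim X=n$, the bound $d_{\mathbb Z}X\ge n-1$ forces this attained maximum into $\{n-1,n\}$, which already witnesses $\mathcal H_{X,\mathbb Z}\cap\{n-1,n\}\neq\varnothing$. As $\dim X=n$ is finite, $\dim_{\mathbb Z}X=n$, so by the Bockstein theorem there is a field $G\in\sigma(\mathbb Z)$, that is $G=\mathbb Q$ or $G=\mathbb Z_p$, with $\dim_GX=n$; by part~(1), $n\in\mathcal H_{X,G}$. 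If $\dim_{\mathbb Q}X=n$ I take $G=\mathbb Q$ and reproduce the final paragraph of Theorem~\ref{main2}(3): part~(1) gives a point $x$ with $H_{n-1}(\bd\,\overline U;\mathbb Q)\neq0$ for all small $U$, Proposition~4.5 upgrades this to $H_{n-1}(\bd\,\overline U;\mathbb Z)\neq0$, and Proposition~4.9 (with $\dim X=n$) yields $d_{\mathbb Z}X=n$, so in fact $n\in\mathcal H_{X,\mathbb Z}$.

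There remains the torsion case $\dim_{\mathbb Q}X\le n-1$, where $\dim_{\mathbb Z_p}X=n$ for some prime $p$ and part~(1) supplies a closed $F$ and a nontrivial $\gamma\in H_{n-1}(F;\mathbb Z_p)$ with $i^{n-1}_{F,X}(\gamma)=0$. The key tool is the Bockstein homomorphism $\beta\colon H_{n-1}(F;\mathbb Z_p)\to H_{n-2}(F;\mathbb Z)$ attached to $0\to\mathbb Z\xrightarrow{p}\mathbb Z\to\mathbb Z_p\to0$, realized in the exact homology $\widehat H_*$ (where coefficient sequences are exact) and transported to \v Cech homology through the natural maps $T$ of Section~4. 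By naturality $i^{n-2}_{F,X}(\beta\gamma)=\beta\big(i^{n-1}_{F,X}(\gamma)\big)=0$, so whenever $\beta\gamma\neq0$ the element $\beta\gamma\in H_{n-2}(F;\mathbb Z)$ is nontrivial and dies in $X$, giving $n-1\in\mathcal H_{X,\mathbb Z}$ and hence $d_{\mathbb Z}X\ge n-1$.

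The main obstacle is the sub-case $\beta\gamma=0$, where $\gamma$ lifts to an integral class $\tilde\gamma\in H_{n-1}(F;\mathbb Z)$ whose image $i^{n-1}_{F,X}(\tilde\gamma)$ is only known to be divisible by $p$, not to vanish; thus $\tilde\gamma$ need not die in $X$ and the naive descent breaks down. This is exactly where homogeneity must be invoked. The plan is to take for $F$ the boundaries $\bd\,\overline U$ of the distinguished basis provided by Proposition~3.1(1) (applicable since $n\in\mathcal H_{X,\mathbb Z_p}$ and, by part~(1), $n+1\not\in\mathcal H_{X,\mathbb Z_p}$), for which $H_{n-1}(\overline U;\mathbb Z_p)=0$ and whose associated $\mathbb Z_p$-membrane has nonempty interior by Proposition~3.1(2). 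Feeding $H_{n-1}(\overline U;\mathbb Z_p)=0$ into the universal-coefficient description of $\widehat H_*$ (Section~4) to control the $p$-divisibility of $H_{n-1}(\overline U;\mathbb Z)$ and the $p$-torsion of $H_{n-2}(\overline U;\mathbb Z)$, I expect to force one of two outcomes: either a suitable lift $\tilde\gamma$ already dies in $\overline U$, giving $n\in\mathcal H_{X,\mathbb Z}$, or the Bockstein of a companion class is nonzero, giving $n-1\in\mathcal H_{X,\mathbb Z}$. Showing that the interior condition on the membrane (the genuine use of homogeneity) compels one of these alternatives is the technical heart of the argument and the step I anticipate to be hardest.
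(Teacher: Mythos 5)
Your part (1) is correct and is essentially the paper's own argument: $n+1\notin\mathcal H_{X,G}$ from Proposition 2.1, and $n\in\mathcal H_{X,G}$ by running the $h\dim_G$/excision/exact-sequence argument of Theorem~\ref{main2}(2)--(3) with local contractibility of the $ANR$ replacing dimensional full-valuedness; the paper likewise does not use homogeneity here.

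Part (2), however, has a genuine gap --- in fact two. First, the reduction to field coefficients fails: the Bockstein family $\sigma(\mathbb Z)$ consists of the localizations $\mathbb Z_{(p)}$, not of fields, and the Bockstein inequalities only give $\dim_{\mathbb Z_{(p)}}X\le\max\{\dim_{\mathbb Q}X,\,\dim_{\mathbb Z_{p^{\infty}}}X+1\}$. By Dranishnikov's realization theorem there are compacta with $\dim_{\mathbb Z}X=n$ but $\dim_{F}X\le n-1$ for every field $F$, so "there is a field $G$ with $\dim_GX=n$'' is not a consequence of $\dim_{\mathbb Z}X=n$; at best you get a prime field $F$ with $\dim_FX\ge n-1$, and you would then have to rule out the dimension drop for homogeneous $ANR$s separately. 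Second --- and you say so yourself --- the descent from $\mathbb Z_p$ to $\mathbb Z$ is not carried out in the sub-case $\beta\gamma=0$, which is exactly the case that matters: a lift $\tilde\gamma\in H_{n-1}(F;\mathbb Z)$ of $\gamma$ only has $i^{n-1}_{F,X}(\tilde\gamma)$ divisible by $p$, and no amount of information about $H_{n-1}(\overline U;\mathbb Z_p)$ alone controls the infinitely divisible part of $H_{n-1}(\overline U;\mathbb Z)$. As it stands the proposal proves (2) only under the extra hypothesis $\dim_{\mathbb Q}X=n$.

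For comparison, the paper's proof of (2) avoids Bockstein descent entirely. It quotes \cite[Theorem 1.1]{vv1}: for a homogeneous $ANR$ with $\dim X=n$, every point has a basis of neighborhoods $U$ with $\dim \bd\,\overline U=n-1$ and $H^{n-1}(\bd\,\overline U;\mathbb Z)\neq 0$ finitely generated. The two universal-coefficient sequences of Proposition 4.1 show that $\widehat H_{n-1}(\bd\,\overline U;\mathbb Z)$ and $\widehat H_{n-2}(\bd\,\overline U;\mathbb Z)$ cannot both vanish, since that would force $\Hom(H^{n-1}(\bd\,\overline U;\mathbb Z),\mathbb Z)=\Ext(H^{n-1}(\bd\,\overline U;\mathbb Z),\mathbb Z)=0$ and hence $H^{n-1}(\bd\,\overline U;\mathbb Z)=0$. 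Proposition 4.4(iii),(iv) identifies these exact homology groups with the \v Cech groups (using $\dim\bd\,\overline U=n-1$ and finite generation of $H^{n-1}$), and contractibility of $\bd\,\overline U$ in the $ANR$ $X$ kills both inclusion homomorphisms, giving $\mathcal H_{X,\mathbb Z}\cap\{n-1,n\}\neq\varnothing$. Homogeneity enters only through the cohomological non-vanishing of \cite{vv1}, not through membranes or Proposition 3.1. If you want to salvage your route, you would need to import that cohomological input anyway; I recommend adopting the paper's argument for (2).
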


\begin{proof}
$(1)$ Since $G$ is a field, by \cite{ha}, $h\dim_GX=\dim_GX=n$. Then there is a point $x\in X$ having sufficiently small open neighborhoods $U\subset X$ with $\widehat{H}_n(\overline U,bd\,\overline U;G)\neq 0$ (see the arguments from Theorem 2.9(2)).
Consider the exact sequence
$$\to\widehat{H}_{n}(\overline U;G)\to\widehat{H}_{n}(\overline U,bd\,\overline U;G)\to\widehat{H}_{n-1}(bd\,\overline U;G)\to\widehat{H}_{n-1}(\overline U;G).$$
%$$0\to\mathrm{Ext_G}(H^{n+1}(\overline U;G),G)\to\widehat{H}_{n}(\overline U;G)\to\mathrm{Hom_G}(H^{n}(\overline U;G),G)\to 0.$$
We claim that $\widehat{H}_{n}(\overline U;G)=0$ for all sufficiently small $U$. Indeed, since
 the groups $H_{n}(\overline U;G)$ and $\widehat{H}_{n}(\overline U;G)$ are isomorphic by Proposition 4.4(i) (recall that $G$ is a field), and $X$ is locally contractible, we can choose $\overline U$ so small that the inclusion homomorphism $i^n_{\overline U,X}:H_{n}(\overline U;G)\to H_{n}(X;G)$ is trivial. So,
  $\widehat{H}_{n}(\overline U;G)\neq0$ would imply $d_GX\geq n+1$, which contradicts Proposition 2.1. Hence, $\widehat{H}_{n}(\overline U;G)=0$. Then, proceeding as
 in the proof of Theorem 2.9(2), we obtain $n\in\mathcal H_{X,G}$.
Finally, by Proposition 2.1, $d_GX\leq\dim_GX=n$. Therefore, $n+1\not\in\mathcal H_{X,G}$.

$(2)$ By \cite[Theorem 1.1]{vv1}, every point $x\in X$ has a basis of neighborhoods $U$ with $\dim bd\,\overline U=n-1$ and $H^{n-1}(bd\,\overline U;\mathbb Z)\neq 0$. For any such $U$ consider the exact sequences, where the coefficient group $\mathbb Z$ is suppressed
$$0\to\mathrm{Ext}(H^{n}(bd\,\overline U),\mathbb Z)\to\widehat{H}_{n-1}(bd\,\overline U)\to\mathrm{Hom}(H^{n-1}(bd\,\overline U),\mathbb Z)\to 0$$
and
$$0\to\mathrm{Ext}(H^{n-1}(bd\,\overline U),\mathbb Z)\to\widehat{H}_{n-2}(bd\,\overline U)\to\mathrm{Hom}(H^{n-2}(bd\,\overline U),\mathbb Z)\to 0.$$
The equality $\widehat{H}_{n-1}(bd\,\overline U;\mathbb Z)=\widehat{H}_{n-2}(bd\,\overline U;\mathbb Z)=0$ implies the triviality of the groups $\mathrm{Ext}(H^{n-1}(bd\,\overline U;\mathbb Z),\mathbb Z)$
and $\mathrm{Hom}(H^{n-1}(bd\,\overline U;\mathbb Z),\mathbb Z)$. Thus, $H^{n-1}(bd\,\overline U;\mathbb Z)=0$ (see \S52, Exercise 9 from \cite{fu}), a contradiction. Hence, at least one of the groups $\widehat{H}_{n-1}(bd\,\overline U;\mathbb Z)$ and $\widehat{H}_{n-2}(bd\,\overline U;\mathbb Z)$ is not trivial. According to \cite[Theorem 1.1]{vv1}, $H^{n-1}(bd\,\overline U;\mathbb Z)$ is finitely generated. Hence, by \cite[Theorem 4]{sk} (see also Proposition 4.4(iv) below), $\widehat{H}_{n-2}(bd\,\overline U;\mathbb Z)$ is isomorphic to
 $H_{n-2}(bd\,\overline U;\mathbb Z)$. Moreover, the equality $\dim bd\,\overline U=n-1$ yields that $\widehat{H}_{n-1}(bd\,\overline U;\mathbb Z)$ is isomorphic to $H_{n-1}(bd\,\overline U;\mathbb Z)$. Because $X$ is an $ANR$, the sets $bd\,\overline U$ are contractible in $X$ for sufficiently small $U$. Therefore, both homomorphisms $i^{n-1}_{bd\,\overline U,X}$ and
 $i^{n-2}_{bd\,\overline U,X}$ are trivial, which implies that $\mathcal H_{X,\mathbb Z}$ contains at least one of the integers $n-1$ and $n$.
\end{proof}

\section{Appendix}
There are two types of Universal Coefficients Theorems for the exact homology, see Theorem 2 and Theorem 3 from \cite{sk}.
\begin{pro}
Let $(X,A)$ be a pair of compact metric spaces and $G$ be a module over a principal ideal domain $K$. Then we have the exact sequence:
$$0\to\Ext_K(H^{n+1}(X,A;K),G)\to\widehat{H}_{n}(X,A;G)\to \Hom_K(H^n(X,A;K),G)\to 0.$$
If, in addition, $G$ is finitely generated, the sequence
$$0\to\widehat{H}_{n}(X,A;K)\otimes
_KG\to\widehat{H}_{n}(X,A;G)\to\widehat{H}_{n-1}(X,A;G)*_KG\to 0$$ is also exact, where
$\widehat{H}_{n-1}(X,A;K)*_KG$ denotes the torsion product $\Tor_K(\widehat{H}_{n-1}(X,A;K),G)$.
\end{pro}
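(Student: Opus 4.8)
The plan is to deduce both sequences from the classical universal coefficient theorems over the principal ideal domain $K$ applied to finite polyhedral approximations, and then to pass to the limit. Since $(X,A)$ is a compact metric pair, write it as the inverse limit of a countable tower of compact polyhedral pairs $(X_i,A_i)$. Two standard facts about this presentation will be used throughout: the \v{C}ech cohomology is the direct limit $H^n(X,A;K)=\varinjlim_i H^n(X_i,A_i;K)$, while the exact homology fits into the Milnor sequence
\[
0\to{\varprojlim_i}^1 H_{n+1}(X_i,A_i;G)\to\widehat{H}_n(X,A;G)\to\varprojlim_i H_n(X_i,A_i;G)\to 0.
\]
Both displayed sequences are precisely Theorems 2 and 3 of \cite{sk}; the strategy below reconstructs them from the finite-complex case, where everything is classical.

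For the second (coefficient) sequence, I would first apply the ordinary homology universal coefficient theorem over $K$ to each finite pair, obtaining the short exact sequences
\[
0\to H_n(X_i,A_i;K)\otimes_K G\to H_n(X_i,A_i;G)\to\Tor_K(H_{n-1}(X_i,A_i;K),G)\to 0,
\]
which form a short exact sequence of inverse systems in $i$. Applying $\varprojlim$ and ${\varprojlim}^1$ yields a six-term exact sequence, to be spliced together with the Milnor sequences for $\widehat{H}_n(X,A;G)$ and $\widehat{H}_n(X,A;K)$. The finite generation of $G$ is the crucial hypothesis here: it allows $-\otimes_K G$ and $\Tor_K(-,G)$ to commute with the relevant inverse limits, identifying the limit terms with $\widehat{H}_n(X,A;K)\otimes_K G$ and $\Tor_K(\widehat{H}_{n-1}(X,A;K),G)$. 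The main obstacle in this step is the bookkeeping of the ${\varprojlim}^1$ corrections; one must check that finite generation forces the offending derived-limit terms either to vanish or to be absorbed, so that the assembled sequence collapses to the asserted three-term one.

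For the first ($\Hom$/$\Ext$) sequence, the key finite-level observation is that dualizing the finitely generated free cochain complex $C^\bullet(X_i,A_i;K)$ by $\Hom_K(-,G)$ reproduces the chain complex $C_\bullet(X_i,A_i;G)$; hence the algebraic universal coefficient theorem for $\Hom_K(-,G)$ gives, at the finite level,
\[
0\to\Ext_K(H^{n+1}(X_i,A_i;K),G)\to H_n(X_i,A_i;G)\to\Hom_K(H^n(X_i,A_i;K),G)\to 0.
\]
Passing to the limit, $\Hom_K(-,G)$ turns the direct limit $H^n(X,A;K)=\varinjlim_i H^n(X_i,A_i;K)$ into $\varprojlim_i\Hom_K(H^n(X_i,A_i;K),G)=\Hom_K(H^n(X,A;K),G)$, which supplies the quotient term, while the kernel is built from $\varprojlim_i\Ext_K$ together with the ${\varprojlim_i}^1\Hom_K$ contribution. \emph{The hard part} is to identify this combined $\varprojlim/{\varprojlim}^1$ term with the single group $\Ext_K(H^{n+1}(X,A;K),G)$ --- this rests on the general description of $\Ext_K$ of a direct limit of modules over a PID --- and to match Sklyarenko's chain-level definition of $\widehat{H}_n$ with the homology of the dualized \v{C}ech cochain complex, so that the two Milnor-type sequences align term by term with the algebraic one.
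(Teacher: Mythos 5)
First, a point of reference: the paper does not prove this proposition at all --- it is stated as background in the appendix and attributed verbatim to Theorems 2 and 3 of Sklyarenko \cite{sk}. So your proposal cannot be measured against an argument in the paper; it has to stand on its own, and as written it does not. Your overall strategy (finite polyhedral approximations, the classical universal coefficient theorems at each finite stage, the Milnor $\varprojlim$--${\varprojlim}^1$ sequence for $\widehat{H}_*$) is the standard starting point, but the two places you label ``the main obstacle'' and ``the hard part'' are exactly where the content of the theorem lives, and neither is resolved. Concretely: (a) the claim that finite generation of $G$ lets $-\otimes_K G$ and $\Tor_K(-,G)$ commute with the relevant inverse limits is false as a mechanism --- for the tower $\mathbb Z\xleftarrow{\;2\;}\mathbb Z\xleftarrow{\;2\;}\cdots$ one has $\varprojlim M_i=0$ while $\varprojlim\,(M_i\otimes\mathbb Z/3)=\mathbb Z/3$ --- so the second sequence cannot be obtained by the commutation you invoke; (b) for the first sequence, splicing the six-term $\varprojlim$/${\varprojlim}^1$ sequence of the system $0\to\Ext_K(H^{n+1}_i,G)\to H_n(X_i,A_i;G)\to\Hom_K(H^n_i,G)\to 0$ with the Milnor sequence and with $0\to{\varprojlim}^1\Hom_K(H^{n+1}_i,G)\to\Ext_K(\varinjlim H^{n+1}_i,G)\to\varprojlim\Ext_K(H^{n+1}_i,G)\to 0$ requires, among other things, that the connecting map $\varprojlim\Hom_K(H^n_i,G)\to{\varprojlim}^1\Ext_K(H^{n+1}_i,G)$ vanish (otherwise $\widehat{H}_n(X,A;G)\to\Hom_K(H^n(X,A;K),G)$ need not be surjective), and you give no reason for this.

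The way these difficulties are actually avoided in \cite{sk} (and in Massey's treatment \cite{ma}) is by working at the chain level rather than with derived limits of homology groups: one first shows that $\widehat{H}_n(X,A;G)$ is the $n$-th homology of $\Hom_K(C^*,G)$ for a free $K$-cochain complex $C^*$ computing $H^*(X,A;K)$. The first sequence is then the algebraic universal coefficient theorem for the functor $\Hom_K(-,G)$ applied to a free cochain complex over a PID, with no finiteness hypotheses, since submodules of free modules over a PID are free. For the second, finite generation of $G$ enters only through the natural isomorphism $\Hom_K(C,K)\otimes_KG\cong\Hom_K(C,G)$ for $C$ free and $G$ finitely generated; this rewrites $\Hom_K(C^*,G)$ as $D_*\otimes_KG$ with $D_*=\Hom_K(C^*,K)$ a complex of torsion-free, hence flat, $K$-modules, and the K\"unneth-type universal coefficient sequence for flat complexes over a PID gives the result. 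If you want to salvage your outline, replace the two limit-commutation steps by this chain-level identification; as it stands the proposal is a plan, not a proof.
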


We say that a space $X$ is {\em homologically locally connected up to dimension $n$} (br., $lc^n$) if for every $x\in X$ and a neighborhood $U$ of $x$ in $X$ there exists a neighborhood $V\subset U$ of $x$ such that the homomorphism $\widetilde{i}_{V,U}^m:\widetilde{H}_m(V;\mathbb Z)\to\widetilde{H}_m(U;\mathbb Z)$ is trivial for all $m\leq n$. Here, $\widetilde{H}_*(.;.)$ denotes
 the singular homology groups.
 According to \cite[Theorem 1]{mar}, the following is true:
\begin{pro}
Let $(X,A)$ be a pair of paracompact spaces. If both $X$ and $A$ are $lc^n$, then there exists a natural transformation $M_{X,A}$ between the singular and
\v{C}ech homologies of $(X,A)$ such that $M^k_{X,A}:\widetilde{H}_k(X,A;G)\to H_k(X,A;G)$ is an isomorphism for each $k\leq n$ and each group $G$.
\end{pro}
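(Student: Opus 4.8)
The plan is to build the natural transformation $M_{X,A}$ by the standard comparison between singular chains and the nerves defining \v{C}ech homology, and then to upgrade it to an isomorphism in degrees $\le n$ by exploiting the local connectivity hypothesis through a skeletal gluing argument. First I would treat the absolute case and pass to pairs afterwards. Recall that $H_*(X;G)$ is the inverse limit of the simplicial homology groups $H_*(\mathrm{N}_\omega;G)$ of the nerves $\mathrm{N}_\omega$ of the open covers $\omega$ of $X$, directed by refinement. Given a singular chain, the compactness of each simplex image lets one subdivide barycentrically until every simplex lies inside a member of $\omega$; assigning to each such simplex a carrier vertex of $\mathrm{N}_\omega$ produces a chain map $\widetilde{H}_*(X;G)\to H_*(\mathrm{N}_\omega;G)$, well defined up to chain homotopy since any two admissible carriers span a simplex of the nerve. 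These maps are compatible with the bonding maps $\pi^k_{\omega',\omega}$, so they assemble in the limit to $M_X\colon\widetilde{H}_*(X;G)\to H_*(X;G)$; performing the construction relative to the subcovers over $A$ gives $M_{X,A}$ on the pair, and naturality with respect to maps of pairs follows from that of subdivision and of the nerve projections.

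Next I would reduce the relative statement to the absolute one. Since $M$ commutes with the connecting homomorphisms, it gives a ladder between the long exact homology sequences of $(X,A)$ in the singular and \v{C}ech theories. If $M^k$ is an isomorphism on both $X$ and $A$ for all $k\le n$, then applying the five lemma at the term $\widetilde{H}_k(X,A;G)$---whose four relevant neighbours involve only the absolute groups in degrees $k$ and $k-1$, all $\le n$---shows that $M^k_{X,A}$ is an isomorphism for every $k\le n$. Hence it suffices to prove that, for an $lc^n$ paracompactum $X$, the map $M^k_X$ is an isomorphism whenever $k\le n$.

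For the absolute case I would argue by induction on $k$, using $lc^n$ to realize nerve data by genuine singular chains. For surjectivity, an element of $H_k(X;G)$ is a thread of compatible simplicial cycles $z_\omega\in H_k(\mathrm{N}_\omega;G)$; using the $lc^n$ property one builds, for sufficiently fine $\omega$, a partial realization of the $k$-skeleton of $\mathrm{N}_\omega$ into $X$ by sending vertices to points of the corresponding cover members and extending over simplices of dimension $\le n$, filling the null-homotopic boundary spheres that local connectivity provides; this carries $z_\omega$ to a singular $k$-cycle projecting back to the given thread. For injectivity, if a singular $k$-cycle maps to $0$ in $H_k(X;G)$, it bounds in $\mathrm{N}_\omega$ for every fine enough $\omega$, and the same local fillings convert the simplicial bounding chains into a singular chain bounding the original cycle in $X$.

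The hard part will be the two realization arguments in the last paragraph: with only $lc^n$ available, rather than genuine local contractibility, the maps out of and into the nerves exist merely on skeleta up to dimension $n$ and are inverse to the projections only up to a controlled homotopy. Making the mesh control and the skeletal induction precise---so that the partial realizations are compatible across refinements and the composites agree with the identity in all degrees $\le n$---is exactly the technical core of the statement, and is the content of Mardešić's theorem \cite{mar} on which we rely.
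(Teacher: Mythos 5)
The paper offers no argument for this proposition at all: it is quoted verbatim from Marde\v{s}i\'c \cite[Theorem 1]{mar}, so there is no internal proof to compare against. Your proposal, by your own admission in its final paragraph, also defers the entire technical core (the skeletal realization of nerve data inside an $lc^n$ space) to that same reference, so at that level the two are consistent.

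However, the one reduction you do carry out in detail --- passing from the absolute case to the relative case by a five-lemma ladder between the long exact sequences of $(X,A)$ --- contains a genuine gap. The five lemma needs \emph{both} rows of the ladder to be exact at the relevant positions, and \v{C}ech homology is precisely the theory that fails the exactness axiom for pairs; the paper itself stresses in the introduction that ``the lack of exactness of \v{C}ech homology is a real problem'' and works around it by passing to Sklyarenko's exact (Steenrod-type) homology. Exactness of the \v{C}ech sequence of $(X,A)$ in degrees $\leq n$ is a \emph{consequence} of the comparison isomorphism with singular homology, not a hypothesis you may assume while proving it, so your reduction is circular as stated. This is why the relative assertion is part of Marde\v{s}i\'c's theorem itself (proved by working with the nerves of coverings of the pair directly) rather than a formal corollary of the absolute case. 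The remainder of your sketch --- the carrier construction of $M_{X,A}$ and the identification of the partial realizations as the hard step --- is a fair description of how the cited proof goes.
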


There is also an analogue of Proposition 4.3 concerning the homology $\widehat{H}_{*}(X,A;G)$, see \cite[Proposition 9]{sk}:
\begin{pro}
Let $(X,A)$ be a pair of compact metric spaces with both $X$ and $A$ being $lc^n$. Then there is a natural transformation $S_{X,A}$ between the singular and the exact homologies of $(X,A)$ such that $S^k_{X,A}:\widetilde{H}_k(X,A;G)\to\widehat{H}_k(X,A;G)$ is an isomorphism for each $k\leq n-1$ and it is surjective for $k=n$.
\end{pro}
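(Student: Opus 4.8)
The statement is the exact-homology counterpart of the preceding singular--\v{C}ech comparison, and it is \cite[Proposition 9]{sk}; the plan is to reproduce its proof by playing the three theories $\widetilde{H}_*$, $H_*$ (\v{C}ech) and $\widehat{H}_*$ (exact) against one another. First I would construct $S_{X,A}$ at the chain level: a singular chain of $(X,A)$ has compact support and, through the canonical comparison of the singular complex with the inverse system of nerves defining the exact homology, determines an exact-homology class; checking that this assignment commutes with boundary operators and with inclusions yields a natural transformation $S_{X,A}\colon\widetilde{H}_*(X,A;G)\to\widehat{H}_*(X,A;G)$ compatible with the long exact sequences of the pair. Since the analogous transformation $M_{X,A}$ to \v{C}ech homology (Proposition 4.3) and Sklyarenko's surjection $T_{X,A}\colon\widehat{H}_*(X,A;G)\to H_*(X,A;G)$ are already at hand and are compatible, $T_{X,A}\circ S_{X,A}=M_{X,A}$, I can transfer information freely between the three theories.

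Next I would reduce the relative statement to the absolute one. Both $\widetilde{H}_*$ and $\widehat{H}_*$ carry natural long exact sequences of $(X,A)$, and by naturality $S$ is a morphism between them. Granting the absolute case — that for a single $lc^n$ compactum $Z$ the map $S^k_Z$ is an isomorphism for $k\le n-1$ and surjective for $k=n$, applied to $Z=X$ and $Z=A$ — the five lemma applied to the ladder
$$\widetilde{H}_k(A)\to\widetilde{H}_k(X)\to\widetilde{H}_k(X,A)\to\widetilde{H}_{k-1}(A)\to\widetilde{H}_{k-1}(X)$$
gives that $S^k_{X,A}$ is an isomorphism for $k\le n-1$, since the four flanking maps $S^k_A,S^k_X,S^{k-1}_A,S^{k-1}_X$ are then isomorphisms. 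For $k=n$ the epimorphism version applies: $S^n_X$ is surjective, $S^{n-1}_A$ is an isomorphism (hence surjective) and $S^{n-1}_X$ is an isomorphism (hence injective), which forces $S^n_{X,A}$ to be surjective.

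It remains to settle the absolute case for an $lc^n$ compactum $Z$. Here $M^k_Z\colon\widetilde{H}_k(Z;G)\to H_k(Z;G)$ is an isomorphism for every $k\le n$ (Proposition 4.3), while the gap between exact and \v{C}ech homology is measured by the kernel of $T^k_Z$, which the universal coefficient sequence of Proposition 4.2 exhibits as a correction term of $\Ext$-type, $\Ext_K(H^{k+1}(Z;K),G)$ (equivalently the ${\varprojlim}^1$ of the neighbourhood homologies in degree $k+1$). The $lc^n$ hypothesis makes the pro-system of homology groups of small neighbourhoods of $Z$ essentially constant, hence Mittag--Leffler, through degree $n$, so this correction term vanishes whenever $k+1\le n$; for such $k\le n-1$ the map $T^k_Z$ is an isomorphism and therefore $S^k_Z=(T^k_Z)^{-1}\circ M^k_Z$ is an isomorphism. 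At $k=n$ the correction term need not vanish, so $T^n_Z$ is merely surjective; establishing that $S^n_Z$ is nonetheless surjective does \emph{not} follow formally from $M^n_Z=T^n_Z\circ S^n_Z$ and has to be read off from the chain-level construction directly.

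The main obstacle is exactly this degree-$n$ case: one must produce, for every exact-homology class in top degree $n$, a representing singular cycle, and it is here that the loss of one dimension relative to the singular--\v{C}ech comparison of Proposition 4.3 originates, the surviving discrepancy being precisely the $\Ext$/${\varprojlim}^1$ correction carried by exact homology but invisible to both the \v{C}ech and the singular theories. Verifying the chain-level definition of $S$ together with the Mittag--Leffler vanishing in the stated range is where the genuine work lies; the formal five-lemma bookkeeping of the second step is routine once the absolute case is in hand.
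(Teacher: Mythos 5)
The paper does not prove this proposition at all: it is quoted as an external result, namely \cite[Proposition 9]{sk}, so there is no internal argument to compare yours against. Judged on its own terms, your proposal is a strategy rather than a proof, and the gaps sit exactly where the content of Sklyarenko's result lies. First, the transformation $S_{X,A}$ is never actually constructed; the phrase about comparing the singular complex with ``the inverse system of nerves defining the exact homology'' is a placeholder, and producing a natural chain-level map from singular to Steenrod--Sklyarenko homology, compatible with the boundary operators of the pair, is itself a substantial part of the cited theorem. Second, your identification of $\ker T^k_Z$ with $\Ext_K(H^{k+1}(Z;K),G)$ is incorrect: the universal coefficient sequence of Proposition 4.1 computes $\widehat{H}_k$ from \v{C}ech \emph{cohomology}, whereas the kernel of the comparison $T^k_Z\colon\widehat{H}_k\to H_k$ with \v{C}ech \emph{homology} is a ${\varprojlim}^1$ of the homology groups of nerves --- a different group, which is why Proposition 4.4 needs its own list of hypotheses. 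The claim that the $lc^n$ condition forces this pro-system to be Mittag--Leffler through degree $n$ is plausible and is in the spirit of what Sklyarenko proves, but you assert it rather than establish it.

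Third, and most seriously, you concede that the surjectivity of $S^n_Z$ in the top degree ``has to be read off from the chain-level construction directly'' and then do not do so; since that construction was itself left undone, the $k=n$ half of the statement --- precisely the part that cannot be recovered formally from $M=T\circ S$ together with the isomorphism statements in lower degrees --- remains unproved. Your five-lemma reduction of the relative case to the absolute case is correct bookkeeping, but everything it rests on is either incomplete or misstated. Given that the paper itself treats this proposition as a citation, the defensible options are to cite \cite[Proposition 9]{sk} as the paper does, or to work through Sklyarenko's actual construction; the present outline does neither.
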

As we already noted, if $G$ is a module over a ring with unity and $(X,A)$ is a compact metric pair, then there exists a natural transformation $$T_{X,A}:\widehat{H}_{*}(X,A;G)\to H_{*}(X,A;G)$$ between the exact and \v{C}ech homologies such that the homomorphism $T^k_{X,A}:\widehat{H}_{k}(X,A;G)\to H_{k}(X,A;G)$ is surjective for each $k$. According to \cite[Theorem 4]{sk}, this homomorphism is an isomorphism under certain conditions. We list below  some of these conditions.
\begin{pro}
The homomorphism $T^k_{X,A}$ is an isomorphism in each of the following situations:
\begin{itemize}
\item[(i)] The group $G$ admits a compact topology or $G$ is a vector space over a field;
\item[(ii)] Both $\widehat{H}_{k}(X,A;G)$ and $G$ are countable modules;
\item[(iii)] $\dim X=k$;
\item[(iv)] both $H^{k+1}(X,A;\mathbb Z)$ and $G$ are finitely generated with finite number of relations;
\end{itemize}
\end{pro}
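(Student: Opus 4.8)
The plan is to factor $T^k_{X,A}$ through the Milnor $\varprojlim^1$-sequence that compares exact (Steenrod) homology with \v{C}ech homology. Realizing the compact metric pair as an inverse limit $(X,A)=\varprojlim_i(X_i,A_i)$ of compact polyhedral pairs, for every $k$ one has a natural short exact sequence
$$0\to{\varprojlim_i}^{1} H_{k+1}(X_i,A_i;G)\to\widehat{H}_k(X,A;G)\xrightarrow{\,T^k_{X,A}\,}\varprojlim_i H_k(X_i,A_i;G)\to 0,$$
whose right-hand term is precisely the \v{C}ech group $H_k(X,A;G)$ and whose quotient map is $T^k_{X,A}$ itself. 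This already explains the surjectivity of $T^k_{X,A}$ asserted just before the statement, and it reduces everything to a single assertion: $T^k_{X,A}$ is an isomorphism \emph{if and only if} ${\varprojlim}^1 H_{k+1}(X_i,A_i;G)=0$. So in each of the four cases I only need to establish the vanishing of this derived limit.

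The first three cases follow from the standard vanishing criteria for $\varprojlim^1$. For (i): if $G$ carries a compact topology, the groups $H_{k+1}(X_i,A_i;G)$ are compact Hausdorff and the bonding homomorphisms are continuous, and the derived limit of an inverse sequence of compact groups vanishes; if $G$ is a vector space over a field, the same groups are finite-dimensional vector spaces (each $X_i$ is a finite complex), so their images stabilize along the tower, the sequence is Mittag--Leffler, and again ${\varprojlim}^1=0$. For (ii): if $G$ is countable then every $H_{k+1}(X_i,A_i;G)$ is countable, and by the dichotomy for derived limits of countable abelian groups ${\varprojlim}^1 H_{k+1}(X_i,A_i;G)$ is either trivial or uncountable; since it embeds into the countable group $\widehat{H}_k(X,A;G)$, it must be trivial. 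For (iii): when $\dim X=k$ we may choose all $X_i$ of dimension $\le k$, so the relative chain complexes vanish above degree $k$, giving $H_{k+1}(X_i,A_i;G)=0$ and hence a trivial derived limit.

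The remaining case (iv) is the delicate one and is where I expect the main obstacle to lie. Here the terms $H_{k+1}(X_i,A_i;G)$ are finitely generated (finite complexes, finitely generated $G$), but finite generation of the individual terms does \emph{not} by itself force ${\varprojlim}^1=0$. The plan is to bring in the finiteness of the \v{C}ech cohomology $H^{k+1}(X,A;\mathbb Z)=\varinjlim_i H^{k+1}(X_i,A_i;\mathbb Z)$ through the universal coefficient sequence of Proposition 4.1 (with $K=\mathbb Z$): combining it with the universal coefficient splittings on the finite polyhedra, finite generation of the direct limit of cohomology should pin down the free ranks and torsion of $H_{k+1}(X_i,A_i;G)$ so that they stabilize along the inverse system, yielding the Mittag--Leffler property and hence ${\varprojlim}^1=0$. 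Reconciling finite generation of a \emph{direct} limit of cohomology with the Mittag--Leffler condition on the dual \emph{inverse} system of homology is exactly the technical heart of the argument; this is precisely the computation carried out in Sklyarenko \cite[Theorem 4]{sk}, of which the present proposition is a restatement, so it suffices to quote that theorem for (iv) while the other cases are the elementary $\varprojlim^1$-vanishing facts recalled above.
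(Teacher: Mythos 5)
Your proposal is essentially correct but follows a genuinely different route from the paper: the paper offers no argument for this proposition at all, simply listing (i)--(iv) as instances of \cite[Theorem 4]{sk}, whereas you reconstruct the statement from Milnor's exact sequence relating $\widehat{H}_k(X,A;G)$ to the \v{C}ech group through the derived limit ${\varprojlim}^1 H_{k+1}(X_i,A_i;G)$, and reduce each case to the vanishing of that term. This is the right mechanism (it is also how Sklyarenko's comparison theorem is actually proved), and your arguments for (ii) (the trivial-or-uncountable dichotomy for derived limits of countable groups, combined with the embedding into the countable group $\widehat{H}_k(X,A;G)$) and for (iii) (nerves of dimension at most $k$) are complete. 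Two caveats. In (i), for $G$ a vector space over a field $F$ the groups $H_{k+1}(X_i,A_i;G)$ need \emph{not} be finite-dimensional when $G$ is infinite-dimensional over $F$, and ${\varprojlim}^1$ of infinite-dimensional vector spaces does not vanish automatically; the correct repair is $H_{k+1}(X_i,A_i;G)\cong H_{k+1}(X_i,A_i;F)\otimes_F G$ with $H_{k+1}(X_i,A_i;F)$ finite-dimensional, so the images along the tower have the form $W_j\otimes_F G$ for a descending chain of subspaces $W_j$ of a fixed finite-dimensional space, which stabilizes and gives Mittag--Leffler. Second, your case (iv) is a plan rather than a proof --- the passage from finite generation of the direct limit $H^{k+1}(X,A;\mathbb Z)$ to the Mittag--Leffler property of the inverse system of homology is exactly the nontrivial content --- and you ultimately defer to the same \cite[Theorem 4]{sk} the paper cites for all four cases, so nothing is lost relative to the paper, but that part of your write-up is not self-contained. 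On balance your approach buys an actual explanation of why hypotheses (i)--(iii) force $T^k_{X,A}$ to be injective, at the cost of having to verify (or cite) the identification of the quotient map in Milnor's sequence with Sklyarenko's transformation $T^k_{X,A}$ and the delicate finiteness analysis in (iv).
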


We don't know if the second Universal Coefficient Theorem from Proposition 4.1 also holds for \v{C}ech homologies when $K=\mathbb Z$ and $G$ is a torsion free field. Probably $\widehat{H}_{n}(X,A;G)$ is not isomorphic to $\widehat{H}_{n}(X,A;K)\otimes\mathbb G$, but we have the following conclusion:
\begin{pro}
Let $(X,A)$ be a compact pair and $n$ be a non-negative integer. Then $H_{n}(X,A;G)\neq 0$ implies $H_{n}(X,A;\mathbb Z)\neq 0$ for every torsion free abelian group $G$.
\end{pro}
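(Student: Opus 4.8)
The plan is to trade the geometry for homological algebra of inverse limits through the nerve presentation of \v{C}ech homology. First I would choose, using metrizability of $X$, a cofinal sequence $\omega_1\prec\omega_2\prec\cdots$ of finite open covers of the pair $(X,A)$ and pass to the associated inverse sequence of finite simplicial pairs $(N_k,N_k^A)$, so that, for any coefficients, $H_m(X,A)=\varprojlim_k H_m(N_k,N_k^A)$ for every $m$. Since a torsion free abelian group $G$ is flat over $\mathbb Z$, we have $\Tor(\,\cdot\,,G)=0$, so the Universal Coefficient Theorem for the \emph{finite} complexes $(N_k,N_k^A)$ yields natural isomorphisms $H_n(N_k,N_k^A;G)\cong H_n(N_k,N_k^A;\mathbb Z)\otimes G$, compatible with the bonding homomorphisms. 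Setting $L_k:=H_n(N_k,N_k^A;\mathbb Z)$ (a finitely generated abelian group), the statement reduces to the purely algebraic assertion that $\varprojlim_k(L_k\otimes G)\neq 0$ implies $\varprojlim_k L_k\neq 0$.

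To prove this assertion I would argue through stable images. For each $k$ the subgroups $C_{j,k}:=\mathrm{im}(L_j\to L_k)$, $j\geq k$, form a decreasing chain in the finitely generated group $L_k$; assuming this chain stabilises to $B_k\subseteq L_k$, flatness of $G$ gives $\mathrm{im}(L_j\otimes G\to L_k\otimes G)=C_{j,k}\otimes G$ as subgroups of $L_k\otimes G$, and these stabilise to $B_k\otimes G$. The restricted inverse system $\{B_k\}$ has surjective bonding maps, and tensoring by $G$ keeps them surjective. A nonzero thread $(\xi_k)$ of $\varprojlim_k(L_k\otimes G)$ must have some coordinate $\xi_{k_0}\neq 0$ lying in $B_{k_0}\otimes G$, whence $B_{k_0}\neq 0$; surjectivity of the stable system over the countable index set then produces a nonzero thread of $\varprojlim_k B_k=\varprojlim_k L_k$, giving $H_n(X,A;\mathbb Z)\neq 0$.

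The hard part is exactly the stabilisation hypothesis used above, i.e.\ the Mittag--Leffler condition for the sequence $\{L_k\}$. For an arbitrary inverse sequence of finitely generated abelian groups the chains $C_{j,k}$ need not stabilise, and when they do not the intersection $\bigcap_j(C_{j,k}\otimes G)$ can be strictly larger than $\big(\bigcap_j C_{j,k}\big)\otimes G$; this is precisely the failure of $\varprojlim$ to commute with $\otimes G$, and the reason a nonzero class over $G$ need not descend to $\mathbb Z$. I therefore expect the real work to be in showing that the systems occurring here may be replaced by Mittag--Leffler ones, and it is at this step that the torsion-freeness of $G$ together with the finiteness features of the situation must genuinely be brought to bear. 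One could alternatively route the argument through the first Universal Coefficient sequence of Proposition~4.1 together with the surjection $T^n_{X,A}$, but this confronts the same obstruction in the guise of controlling $\Hom(H^{n}(X,A;\mathbb Z),G)$ and $\Ext(H^{n+1}(X,A;\mathbb Z),G)$; once image-stabilisation is secured, the diagram chase of the previous paragraph finishes the proof.
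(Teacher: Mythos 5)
Your reduction is exactly the one the paper makes: pass to the nerves of a cofinal sequence of finite covers, use the Universal Coefficient Theorem for the finite simplicial pairs together with flatness of the torsion free group $G$ to identify $H_n(N_k,N_k^A;G)$ with $H_n(N_k,N_k^A;\mathbb Z)\otimes G$, and reduce everything to the algebraic claim that $\varprojlim_k(L_k\otimes G)\neq 0$ forces $\varprojlim_k L_k\neq 0$ for an inverse sequence of finitely generated abelian groups $L_k$. Where you diverge is that you stop and flag the Mittag--Leffler stabilisation as an unproved hypothesis; since you never establish it, your argument is incomplete as written, and this is a genuine gap rather than a deferrable technicality.

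Your instinct about where the difficulty sits is exactly right, and in fact the gap cannot be closed at this level of generality. Take $L_k=\mathbb Z$ for all $k$ with every bonding map multiplication by $2$, and $G=\mathbb Q$: then $\varprojlim_k L_k=0$ while $\varprojlim_k(L_k\otimes\mathbb Q)\cong\mathbb Q\neq 0$. This system is realised geometrically by the dyadic solenoid $\Sigma=\varprojlim(S^1\xleftarrow{\;2\;}S^1\leftarrow\cdots)$, for which continuity of \v{C}ech homology on compact pairs gives $H_1(\Sigma;\mathbb Z)=0$ but $H_1(\Sigma;\mathbb Q)\cong\mathbb Q$. The paper's own proof crosses this point by asserting that triviality of the inverse limit $H_n(X,A;\mathbb Z)=\varprojlim_k L_k$ forces $L_k=0$ for every $k$; that implication is false for the system above, so the published argument contains the very hole you located, and the proposition as stated appears to fail for $(\Sigma,\varnothing)$, $n=1$, $G=\mathbb Q$. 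Any correct version must add a hypothesis excluding such non--Mittag--Leffler systems (for instance finite generation of $H_n(X,A;\mathbb Z)$, or stability of the images $\mathrm{im}(L_j\to L_k)$), which is precisely the condition you isolated; with that hypothesis your stable-image argument does go through.
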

\begin{proof}
We choose a family $\{\omega_\alpha\}$ of finite open covers of $X$ such that $H_{n}(X,A;G)$ is the limit of the inverse system $\{H_{n}(N_\alpha^X,N_\alpha^A;G),\pi^*_{{\beta},\alpha}\}$, where $N_\alpha^X$ and $N_\alpha^A$ are the nerves of $\omega_\alpha$ and the restriction of $\omega_\alpha$ over $A$, and $\pi_{{\beta},\alpha}:(N_{\beta}^X,N_{\beta}^A)\to (N_\alpha^X,N_\alpha^A)$ are the corresponding simplicial maps with $\beta$ being a refinement of $\alpha$. By Proposition 4.2, all $H_{n}(N_\alpha^X,N_\alpha^A;G)$ (resp., $H_{n}(N_\alpha^X,N_\alpha^A;\mathbb Z)$) are isomorphic to the singular homology groups $\widetilde{H}_{n}(N_\alpha^X,N_\alpha^A;G)$ (resp., $\widetilde{H}_{n}(N_\alpha^X,N_\alpha^A;\mathbb Z)$). So, the second Universal Coefficient Theorem from Proposition 4.1, which holds for the singular homology with arbitrary coefficients, implies that
$H_{n}(N_\alpha^X,N_\alpha^A;G)$ are isomorphic to the tensor products $H_{n}(N_\alpha^X,N_\alpha^A;\mathbb Z)\otimes G$ (recall that, since $G$ is torsion free, the torsion product of $G$ with any abelian group is trivial). Because $H_{n}(X,A;\mathbb Z)$ is the
limit of the inverse sequence $\{H_{n}(N_\alpha^X,N_\alpha^A;\mathbb Z),\pi^*_{\beta,\alpha}\}$, the triviality of $H_{n}(X,A;\mathbb Z)$ would yields
that $H_{n}(N_\alpha^X,N_\alpha^A;\mathbb Z)=0$ for all $\alpha$. Consequently, all groups $H_{n}(N_\alpha^X,N_\alpha^A;G)$ would be also trivial, which contradicts the condition $H_{n}(X,A;G)\neq 0$.
\end{proof}

\begin{pro}
Let $(X,A)$ be a compact pair, $G$ be an abelian group  and $n\geq 1$ an integer. If there is a non-trivial element $\gamma\in H_{n-1}(A;G)$ such that
$i^{n-1}_{A,X}(\gamma)=0$, then $H_n(X,A;G)\neq 0$.
\end{pro}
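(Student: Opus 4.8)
The plan is to reduce everything to the nerves of finite open covers, where the homology sequence of a pair is exact, and then to pass to the \v{C}ech inverse limits. Let $\omega$ range over the directed set of finite open covers of $X$, let $N^X_\omega$ be the nerve of $\omega$ and $N^A_\omega$ the nerve of the restriction $\omega|A$ (these restrictions are cofinal among the covers of $A$, as in the proof of Lemma \ref{membrane}). Then $H_{n-1}(A;G)=\varprojlim H_{n-1}(N^A_\omega;G)$, $H_{n-1}(X;G)=\varprojlim H_{n-1}(N^X_\omega;G)$ and $H_n(X,A;G)=\varprojlim H_n(N^X_\omega,N^A_\omega;G)$, while $i^{n-1}_{A,X}$ and the \v{C}ech boundary $\partial$ are the inverse limits of the simplicial maps $i_\omega\colon H_{n-1}(N^A_\omega;G)\to H_{n-1}(N^X_\omega;G)$ and $\partial_\omega\colon H_n(N^X_\omega,N^A_\omega;G)\to H_{n-1}(N^A_\omega;G)$. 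I write $\gamma=(\gamma_\omega)$ for a thread representing the given class.

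First I would localise the hypotheses to each nerve. Since $\gamma\neq 0$, some coordinate $\gamma_{\omega_0}$ is non-trivial, and compatibility of the thread forces $\gamma_\omega\neq 0$ for all $\omega\succeq\omega_0$. The assumption $i^{n-1}_{A,X}(\gamma)=0$ means the thread $(i_\omega(\gamma_\omega))$ is the zero element of $\varprojlim H_{n-1}(N^X_\omega;G)$, and since an element of an inverse limit is zero exactly when all its coordinates vanish, $i_\omega(\gamma_\omega)=0$ for every $\omega$. As $(N^X_\omega,N^A_\omega)$ is a pair of finite simplicial complexes, its homology sequence is exact, so $\ker i_\omega=\operatorname{im}\partial_\omega$; hence each $\gamma_\omega=\partial_\omega(\mu_\omega)$ for some $\mu_\omega\in H_n(N^X_\omega,N^A_\omega;G)$. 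For $\omega\succeq\omega_0$ this already gives $H_n(N^X_\omega,N^A_\omega;G)\neq 0$, and by naturality of $\partial_\omega$ under refinements the thread $(\gamma_\omega)$ lies in $\varprojlim_\omega\operatorname{im}\partial_\omega$.

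The hard part is the final passage to the limit, and this is exactly where the lack of exactness of \v{C}ech homology (the reason the exact homology $\widehat H_*$ is introduced elsewhere in the paper) makes itself felt: non-vanishing of every $H_n(N^X_\omega,N^A_\omega;G)$ does not by itself yield $H_n(X,A;G)=\varprojlim H_n(N^X_\omega,N^A_\omega;G)\neq 0$. What must actually be produced is a lift of the compatible thread $(\gamma_\omega)$ through the surjections $\partial_\omega\colon H_n(N^X_\omega,N^A_\omega;G)\to\operatorname{im}\partial_\omega$; from the short exact sequences $0\to\ker\partial_\omega\to H_n(N^X_\omega,N^A_\omega;G)\to\operatorname{im}\partial_\omega\to 0$ the obstruction to such a lift is the image of $(\gamma_\omega)$ in ${\varprojlim}^1(\ker\partial_\omega)$. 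My plan is to discharge this term by first replacing $\{\omega\}$ with a countable cofinal inverse sequence of covers---available since $X$ is a metric compactum---and then establishing a Mittag--Leffler condition for $\{\ker\partial_\omega\}$, which forces ${\varprojlim}^1(\ker\partial_\omega)=0$ and hence the existence of the lift; the lifted element maps to $\gamma\neq 0$ under $\partial$ and is therefore non-trivial, giving $H_n(X,A;G)\neq 0$. Verifying (or engineering) the vanishing of this derived inverse-limit term is the genuine obstacle; the per-nerve exactness and the localisation of the hypotheses are routine.
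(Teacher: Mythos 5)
Your reduction to the nerves is exactly the paper's: the same inverse systems, the same localisation of the hypotheses to a single cover $\omega_0$ with $\gamma_{\omega_0}\neq 0$ and $i_{\omega_0}(\gamma_{\omega_0})=0$, and the same appeal to exactness of the simplicial homology sequence of $(N^X_{\omega_0},N^A_{\omega_0})$ to get $H_n(N^X_{\omega_0},N^A_{\omega_0};G)\neq 0$. At that point the paper concludes $H_n(X,A;G)\neq 0$, whereas you stop and flag the passage to the limit as the real issue. Your diagnosis of the delicate point is accurate --- non-vanishing of the terms of an inverse system does not force non-vanishing of the limit, and the obstruction to lifting the thread $(\gamma_\omega)\in\varprojlim\mathrm{im}\,\partial_\omega$ to $\varprojlim H_n(N^X_\omega,N^A_\omega;G)$ is a ${\varprojlim}^1$ term --- but you do not resolve it: the Mittag--Leffler argument is announced, not carried out, and you yourself call its verification the genuine obstacle. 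The proposal is therefore incomplete precisely at the step that carries all the content.

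Worse, the route you propose cannot work in the stated generality. Since $\ker\bigl(i^{n-1}_{A,X}\bigr)=\varprojlim\ker i_\omega=\varprojlim\mathrm{im}\,\partial_\omega$, vanishing of ${\varprojlim}^1(\ker\partial_\omega)$ would make $\partial\colon H_n(X,A;G)\to H_{n-1}(A;G)$ surject onto $\ker i^{n-1}_{A,X}$, i.e.\ it would prove exactness of the \v{C}ech homology sequence at $H_{n-1}(A;G)$ for every compact metric pair and every coefficient group. That is exactly the property of \v{C}ech homology that fails in general (the defect the paper circumvents elsewhere by passing to $\widehat H_*$), so a Mittag--Leffler condition for $\{\ker\partial_\omega\}$ cannot hold in general, and the exact preimage of $(\gamma_\omega)$ you are trying to manufacture need not exist. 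What the proposition actually needs is strictly weaker than exactness --- only that $\varprojlim H_n(N^X_\omega,N^A_\omega;G)\neq 0$ --- so the missing argument has to exploit that weaker target rather than ${\varprojlim}^1$-vanishing. Be aware, too, that the paper's own closing inference (from $H_n(N^X_{\alpha_0},N^A_{\alpha_0};G)\neq 0$ to $H_n(X,A;G)\neq 0$) is asserted rather than argued, so it is not a step you can simply import to finish your version.
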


\begin{proof}
Following the notations from the proof of Proposition 4.5, take a family $\{\omega_\alpha\}$ of finite open covers of $X$ such that the homology groups $H_{n}(X,A;G)$,
$H_{n-1}(X;G)$ and $H_{n-1}(A;G)$ are limit, respectively, of the inverse systems $\{H_{n}(N_\alpha^X,N_\alpha^A;G),\pi^*_{{\beta},\alpha}\}$,
$\{H_{n-1}(N_\alpha^X;G),\pi^*_{{\beta},\alpha}\}$ and $\{H_{n-1}(N_\alpha^A;G),\pi^*_{{\beta},\alpha}\}$. If  for each $\alpha$ we denote by
$\pi_\alpha:A\to N_\alpha^A$ the natural map, then there exists $\alpha_0$ such that $\gamma_{\alpha_0}=\pi_{\alpha_0}^*(\gamma)$ is a non-trivial
element of $H_{n-1}(N^A_{\alpha_0};G)$. It follows from the commutative diagram
{ $$
\begin{CD}
H_{n-1}(A;G)@>{{i^{n-1}_{A,X}}}>>H_{n-1}(X;G)\\
@VV{\pi^*_{\alpha_0}}V@VV{\pi^*_{\alpha_0}}V\\
H_{n-1}(N^A_{\alpha_0};G)@>{{i^{n-1}_{N^A_{\alpha_0},N^X_{\alpha_0}}}}>> H_{n-1}(N^X_{\alpha_0};G)@.
\end{CD}
$$}\\
that $i^{n-1}_{N^A_{\alpha_0},N^X_{\alpha_0}}(\gamma_{\alpha_0})=0$. Because $H_{n}(N_{\alpha_0}^X,N_{\alpha_0}^A;G)$,
$H_{n-1}(N^A_{\alpha_0};G)$ and $H_{n-1}(N^X_{\alpha_0};G)$ are naturally isomorphic to the corresponding singular homology groups, we have the exact sequence
$$
\xymatrix{
\to H_{n}(N_{\alpha_0}^X,N_{\alpha_0}^A;G)\to H_{n-1}(N^A_{\alpha_0};G)\to H_{n-1}(N^X_{\alpha_0};G)\to \ldots\\
}
$$
Therefore, $H_{n}(N_{\alpha_0}^X,N_{\alpha_0}^A;G)\neq 0$, which implies $H_n(X,A;G)\neq 0$.
\end{proof}

Let us also mention the following sufficient condition for a given metric compactum to be dimensionally full-valued, see \cite[Corollary 1]{ko}.
We provide a short proof of this result, different from the original one.
\begin{pro}
Let $X$ be a space with $\dim X=n$. If there exists a closed subset $A\subset X$ such that $H_n(X,A;\mathbb Z)\neq 0$, then $X$ is
dimensionally full-valued.
\end{pro}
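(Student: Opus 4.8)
The plan is to reduce dimensional full-valuedness to the single uniform statement $\dim_G X=n$ for every nonzero abelian group $G$, and to obtain this by transferring the homological hypothesis into a statement about integral \v{C}ech cohomology. Since $\dim_G X\le\dim X=n$ always holds, it suffices, for each $G$, to exhibit a closed pair with nonvanishing $n$-th relative cohomology, and the natural candidate is the given pair $(X,A)$.

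First I would pass from \v{C}ech to exact homology. The absence of a Universal Coefficient Theorem for \v{C}ech homology is exactly the obstruction, so $H_n(X,A;\mathbb Z)\neq 0$ cannot be used directly. However, the surjection $T^n_{X,A}\colon\widehat H_n(X,A;\mathbb Z)\to H_n(X,A;\mathbb Z)$ gives $\widehat H_n(X,A;\mathbb Z)\neq 0$, and exact homology does satisfy the Universal Coefficient Theorem of Proposition~4.1. Because $\dim_{\mathbb Z}X\le\dim X=n$, we have $H^{n+1}(X,A;\mathbb Z)=0$, hence $\Ext(H^{n+1}(X,A;\mathbb Z),\mathbb Z)=0$, so the first sequence of Proposition~4.1 (with $K=G=\mathbb Z$) collapses to
$$\widehat H_n(X,A;\mathbb Z)\cong\Hom(H^n(X,A;\mathbb Z),\mathbb Z).$$
Thus $\Hom(H^n(X,A;\mathbb Z),\mathbb Z)\neq 0$, which forces $H^n(X,A;\mathbb Z)$ to admit $\mathbb Z$ as a quotient, since any nonzero homomorphism into $\mathbb Z$ has image isomorphic to $\mathbb Z$. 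In other words, the integral top cohomology carries a non-torsion part.

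The second step spreads this over all coefficients. Unlike \v{C}ech homology, \v{C}ech cohomology satisfies the Universal Coefficient Theorem
$$0\to H^n(X,A;\mathbb Z)\otimes G\to H^n(X,A;G)\to\Tor(H^{n+1}(X,A;\mathbb Z),G)\to 0,$$
obtained by passing the standard sequence for the finite nerves to the direct limit over open covers, direct limits being exact and commuting with $\otimes$ and $\Tor$. Using $H^{n+1}(X,A;\mathbb Z)=0$ once more, the $\Tor$ term vanishes and $H^n(X,A;G)\cong H^n(X,A;\mathbb Z)\otimes G$. Since $H^n(X,A;\mathbb Z)$ surjects onto $\mathbb Z$, right-exactness of $-\otimes G$ yields a surjection $H^n(X,A;\mathbb Z)\otimes G\twoheadrightarrow\mathbb Z\otimes G\cong G$, so $H^n(X,A;G)\neq 0$ whenever $G\neq 0$. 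Hence $\dim_G X\ge n$, and therefore $\dim_G X=n=\dim_{\mathbb Z}X$ for every $G$, which is precisely dimensional full-valuedness.

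The main obstacle is conceptual rather than computational: bridging the two theories. The hypothesis lives in \v{C}ech homology, where no Universal Coefficient Theorem is available, yet the conclusion must hold for all coefficient groups at once. The device that resolves this is to lift the nonvanishing into exact homology, where Proposition~4.1 applies, just long enough to detect the free part of $H^n(X,A;\mathbb Z)$, and then to return to \v{C}ech cohomology, where the Universal Coefficient Theorem is valid, in order to propagate that free part to every $G$. The two vanishings $H^{n+1}(X,A;\mathbb Z)=0$ coming from $\dim_{\mathbb Z}X\le n$ are what make both Universal Coefficient sequences degenerate, and I would state them carefully before invoking either sequence.
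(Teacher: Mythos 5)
Your proposal is correct and follows essentially the same route as the paper: lift the nonvanishing of $H_n(X,A;\mathbb Z)$ to $\widehat H_n(X,A;\mathbb Z)$, use the first Universal Coefficient sequence of Proposition~4.1 together with $H^{n+1}(X,A;\mathbb Z)=0$ to conclude $\Hom(H^n(X,A;\mathbb Z),\mathbb Z)\neq 0$, deduce that $H^n(X,A;\mathbb Z)$ has a free $\mathbb Z$ part, propagate this to $H^n(X,A;G)\neq 0$ for all $G\neq 0$ via the \v{C}ech cohomology Universal Coefficient sequence, and finish with Kuz'minov's criterion. The only cosmetic differences are that you invoke surjectivity of $T^n_{X,A}$ where the paper uses the isomorphism coming from $\dim X=n$, and you phrase the free part as a quotient plus right-exactness of $-\otimes G$ where the paper uses a direct summand; these are equivalent.
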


\begin{proof}
Since $\dim X=n$, $H_{n}(X,A;\mathbb Z)$ and $\widehat{H}_{n}(X,A;\mathbb Z)$ are isomorphic and $H^{n+1}(X,A;\mathbb Z)=0$. So, the
first exact sequence from Proposition 4.1 implies $\Hom(H^n(X,A;\mathbb Z),\mathbb Z)\neq 0$. This means that $H^n(X,A;\mathbb Z)$ contains $\mathbb Z$ as a
direct multiple. Thus,  the tensor product $H^n(X,A;\mathbb Z)\otimes G$ is not trivial for all abelian groups $G$. Then the exact sequence
$$0\to H^n(X,A;\mathbb Z)\otimes G\to H^n(X,A;G)\to H^n(X,A;\mathbb Z)*G\to 0$$ yields $H^n(X,A;G)\neq 0$. Therefore, $\dim_GX=\dim X$ for all abelian
groups $G$. Finally, by \cite[Theorem 11]{ku}, $X$ is dimensionally full-valued.
\end{proof}

\begin{pro}
If $X$ is $k-lc$ and $(k-1)-lc$, where $k\geq 1$, then $X$ is $k-lc$ with respect to every abelian group $G$.
\end{pro}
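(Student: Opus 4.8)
The plan is to reduce the statement to the naturality of the Universal Coefficient Theorem for singular homology, feeding the two hypotheses in through an intermediate neighborhood. Fix $x\in X$ and a neighborhood $U$ of $x$. First I would use that $X$ is $k$-$lc$ to produce a neighborhood $W\subset U$ of $x$ for which $\widetilde{i}^{\,k}_{W,U}:\widetilde{H}_k(W;\mathbb Z)\to\widetilde{H}_k(U;\mathbb Z)$ is trivial; then, applying that $X$ is $(k-1)$-$lc$ to the neighborhood $W$, I obtain a neighborhood $V\subset W$ of $x$ for which $\widetilde{i}^{\,k-1}_{V,W}:\widetilde{H}_{k-1}(V;\mathbb Z)\to\widetilde{H}_{k-1}(W;\mathbb Z)$ is trivial. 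The goal is to show that for this $V$ the homomorphism $\widetilde{i}^{\,k}_{V,U}:\widetilde{H}_k(V;G)\to\widetilde{H}_k(U;G)$ is trivial for an arbitrary abelian group $G$, which is precisely the assertion that $X$ is $k$-$lc$ with respect to $G$.

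Next I would invoke the Universal Coefficient Theorem for singular homology, which for each space $Z$ provides a natural short exact sequence
$$0\to\widetilde{H}_k(Z;\mathbb Z)\otimes G\xrightarrow{\,f_Z\,}\widetilde{H}_k(Z;G)\xrightarrow{\,g_Z\,}\Tor(\widetilde{H}_{k-1}(Z;\mathbb Z),G)\to 0,$$
naturality meaning that an inclusion of spaces induces a commuting ladder between these sequences whose outer vertical maps are $\widetilde{i}^{\,k}\otimes\mathrm{id}_G$ on the left and $\Tor(\widetilde{i}^{\,k-1},\mathrm{id}_G)$ on the right. I would write this ladder twice, once for $V\hookrightarrow W$ and once for $W\hookrightarrow U$, and then run the following chase. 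Given $\alpha\in\widetilde{H}_k(V;G)$, put $\alpha_W=\widetilde{i}^{\,k}_{V,W}(\alpha)$. In the ladder for $V\hookrightarrow W$ the right-hand vertical map is $\Tor(\widetilde{i}^{\,k-1}_{V,W},\mathrm{id}_G)=0$, so $g_W(\alpha_W)=0$ and hence $\alpha_W=f_W(\xi)$ for some $\xi\in\widetilde{H}_k(W;\mathbb Z)\otimes G$. In the ladder for $W\hookrightarrow U$ the left-hand vertical map is $\widetilde{i}^{\,k}_{W,U}\otimes\mathrm{id}_G=0$, so by commutativity of the left square
$$\widetilde{i}^{\,k}_{V,U}(\alpha)=\widetilde{i}^{\,k}_{W,U}(\alpha_W)=\widetilde{i}^{\,k}_{W,U}\big(f_W(\xi)\big)=f_U\big((\widetilde{i}^{\,k}_{W,U}\otimes\mathrm{id}_G)(\xi)\big)=0.$$
As $\alpha$ was arbitrary, $\widetilde{i}^{\,k}_{V,U}$ is the zero homomorphism on $G$-coefficients, establishing the claim.

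The only point requiring care is that both halves of the hypothesis genuinely enter: the $(k-1)$-$lc$ assumption is what kills the $\Tor$-term once one passes to $W$, while the $k$-$lc$ assumption is what kills the tensor-term once one passes from $W$ to $U$, so the two-step factoring through the intermediate neighborhood is essential and neither hypothesis alone suffices. (For $k=1$ the group $\widetilde{H}_0(V;\mathbb Z)$ is free, the $\Tor$-term vanishes automatically, and the argument specializes without incident.) I do not expect any deeper obstacle; the entire content is the naturality of the Universal Coefficient Theorem combined with the diagram chase above.
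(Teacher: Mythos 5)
Your proof is correct and follows essentially the same route as the paper: both arguments factor the inclusion through an intermediate neighborhood and use the naturality of the Universal Coefficient Theorem, with the $(k-1)$-$lc$ hypothesis killing the $\Tor$-term on the inner inclusion and the $k$-$lc$ hypothesis killing the tensor term on the outer one. The only cosmetic difference is that the paper asks for triviality of both integral maps in degrees $k$ and $k-1$ at each of the two steps, which is slightly more than your chase actually needs.
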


\begin{proof}
Let $U$ be a neighborhood of a given point $x$. Since $X$ is $k-lc$ and $(k-1)-lc$, there exist two neighborhoods $W\subset V$ of $x$ such that
$V\subset U$ and the inclusion homomorphisms $\widetilde{i}^p_{W,V}:\widetilde{H}_p(W;\mathbb Z)\to\widetilde{H}_p(V;\mathbb Z)$,
$\widetilde{i}^p_{V,U}:\widetilde{H}_p(V;\mathbb Z)\to\widetilde{H}_p(U;\mathbb Z)$ are trivial for every $p=k-1,k$. Consider the following diagram,
whose rows are exact sequences

$$
\xymatrix{
&\widetilde{H}_k(W;\mathbb Z)\otimes G\ar[rr]^{}\ar[d]_{\widetilde{i}^k_{W,V}\otimes id}&&\widetilde{H}_k(W;G)\ar[rr]^{}\ar[d]_{\widetilde{i}^k_{W,V}}&&
\widetilde{H}_{k-1}(W;\mathbb Z)*G\ar[d]_{\widetilde{i}^{k-1}_{W,V}*id}\\
&\widetilde{H}_k(V;\mathbb Z)\otimes G\ar[rr]^{}\ar[d]_{\widetilde{i}^k_{V,U}\otimes id}&&\widetilde{H}_k(V;G)\ar[rr]^{}\ar[d]_{\widetilde{i}^k_{V,U}}&&
\widetilde{H}_{k-1}(V;\mathbb Z)*G\ar[d]_{\widetilde{i}^{k-1}_{V,U}*id}\\
&\widetilde{H}_k(U;\mathbb Z)\otimes G\ar[rr]^{}&&\widetilde{H}_k(U;G)\ar[rr]^{}&&
\widetilde{H}_{k-1}(U;\mathbb Z)*G\\
}
$$
Because the homomorphisms $\widetilde{i}^{k}_{V,U}\otimes id$ and $\widetilde{i}^{k-1}_{W,V}*id$ are trivial, so is the composition
$\widetilde{i}^{k}_{V,U}\circ\widetilde{i}^{k}_{W,V}=\widetilde{i}^{k}_{W,U}$. Hence, $X$ is $k-lc$ with respect to $G$.
\end{proof}

\begin{pro}
Let $X$ be a metric $lc^{n-1}$-space with the following property: there exists a point $x\in X$ such that any sufficiently small neighborhood $V$ of $x$ contains a closed set $F_V\subset X$ with $H_{n-1}(F_V;G)\neq 0$, where $G$ is a given abelian group. Then $n\in\mathcal H_{X,G}$.
\end{pro}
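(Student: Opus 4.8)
The goal is to produce a closed set $F\subset X$ together with a non-trivial $\gamma\in H_{n-1}(F;G)$ satisfying $i^{n-1}_{F,X}(\gamma)=0$, since by definition this is exactly what $n\in\mathcal H_{X,G}$ asserts. The only structural input I intend to use is the observation recorded just before Theorem \ref{main2}: because $X$ is $lc^{n-1}$, Proposition 4.8 together with Proposition 4.2 shows that $X$ is homologically locally connected up to dimension $n-1$ with respect to \v{C}ech homology with coefficients in $G$. Concretely, for every neighborhood $U$ of $x$ there is a neighborhood $V\subset U$ of $x$ for which the inclusion-induced homomorphism $i^{n-1}_{V,U}\colon H_{n-1}(V;G)\to H_{n-1}(U;G)$ is trivial.

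First I would apply this local-connectedness with $U=X$, which is permissible since $X$ is itself a neighborhood of $x$. This yields a neighborhood $W$ of $x$ such that $i^{n-1}_{W,X}\colon H_{n-1}(W;G)\to H_{n-1}(X;G)$ is the zero homomorphism; in other words every $(n-1)$-dimensional \v{C}ech cycle supported in $W$ bounds in $X$. Next I would feed this $W$ into the standing hypothesis: choosing a neighborhood $V$ of $x$ that is contained in $W$ and small enough for the hypothesis to apply, I obtain a closed set $F:=F_V\subset V\subset W$ with $H_{n-1}(F;G)\neq 0$, and I fix a non-trivial $\gamma\in H_{n-1}(F;G)$. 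Functoriality of \v{C}ech homology gives the factorization $i^{n-1}_{F,X}=i^{n-1}_{W,X}\circ i^{n-1}_{F,W}$, and since $i^{n-1}_{W,X}=0$ we conclude $i^{n-1}_{F,X}(\gamma)=0$. Thus $F$ and $\gamma$ witness $n\in\mathcal H_{X,G}$, completing the argument.

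The one point that calls for care is the role of the (in general non-compact) neighborhood $W$ as the intermediate term in the factorization $i^{n-1}_{F,X}=i^{n-1}_{W,X}\circ i^{n-1}_{F,W}$, since the definition of $\mathcal H_{X,G}$ and most of the machinery in the paper live in the category of compacta. To stay strictly within that category I would instead run the \v{C}ech $lc^{n-1}$ property at the level of a compact (closed) neighborhood of $x$, which is legitimate because in a compact metric space $x$ possesses arbitrarily small closed neighborhoods; the naturality of the inclusion homomorphisms then delivers the same factorization with every space involved compact. Apart from this bookkeeping, the proof is a formal composition of a trivial map with an inclusion, so the genuine content is entirely carried by the \v{C}ech $lc^{n-1}$ property and by the hypothesis furnishing a homologically non-trivial closed set inside every small neighborhood of $x$.
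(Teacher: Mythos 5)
Your argument is correct and is essentially the paper's own proof: both use Proposition 4.8 together with the Marde\v{s}i\'c comparison (Proposition 4.2) to upgrade $lc^{n-1}$ to the \v{C}ech $(n-1)$-$lc$ property with coefficients in $G$, and then kill $\gamma$ by factoring $i^{n-1}_{F_V,X}$ through the inclusion of a small neighborhood of $x$ into $X$. The compactness worry at the end is unnecessary --- the paper factors through the open neighborhood directly, since only functoriality of the inclusion homomorphisms is needed there, not any of the compact-pair machinery.
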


\begin{proof}
Let $G$ be a fixed group. Then $X$ is $k-lc$ with respect to $G$ for all $k\leq n-1$ (for $k\geq 1$ this follows from Proposition 4.8, and for $k=0$ it is obvious). On the other hand, by \cite[Theorem 1]{mar}, the singular homology groups $\widetilde{H}_{n-1}(W;G)$ are isomorphic to the groups $H_{n-1}(W;G)$ for
every open set $W\subset X$. Thus, the inclusion homomorphisms $i^{n-1}_{U,X}:H_{n-1}(U;G)\to H_{n-1}(X;G)$ are trivial for all small
neighborhoods $U$ of $x$.
Then any such $U$ contains $F_U$ and the homomorphism $i^{n-1}_{F_U,X}:H_{n-1}(F_U;G)\to H_{n-1}(X;G)$, being the composition $i^{n-1}_{U,X}\circ i^{n-1}_{F_U,U}$, is trivial.
Therefore, $n\in\mathcal H_{X,G}$.
\end{proof}

\textbf{Acknowledgments.} The author would like to express his gratitude to K. Kawamura for his helpful comments. The author also thanks the referee for his/her careful reading and suggesting many improvements of the paper.

\end{document}